\newtheorem{thm}{Theorem}
\newtheorem{cor}[thm]{Corollary}
\newtheorem{lem}[thm]{Lemma}
\newtheorem{prop}[thm]{Proposition}
\theoremstyle{definition}
\numberwithin{equation}{section}
\begin{document}

\title[Cactus doodles]{Cactus doodles}

\author[J. Mostovoy and A. Rinc\'on-Prat]{Jacob Mostovoy and Andrea Rinc\'on-Prat}

\address{Departamento de Matem\'aticas, CINVESTAV\\ Col. San Pedro Zacatenco, M\'exico, D.F., C.P.\ 07360\\ Mexico}

\begin{abstract}
Cactus doodles are combinatorial/geometric objects that are related to cactus groups in the same way as knots are related to braids. We define them in terms of local moves on plane curves, show that they can be obtained from elements of the cactus group by a ``closing'' procedure and establish some of their basic properties.
\end{abstract}



\maketitle

\section{Introduction and statement of results}

\subsection{What this note is about}
There exist many groups whose elements can be represented by collections of descending curves in a plane or in space and in this respect are similar to the braid group. Examples of such generalized braid groups include virtual braid groups \cite{Ka}, groups of welded braids \cite{FRR}, twin (or planar braid) groups \cite{Kh}. In the same way as braids can be ``closed'' so as to produce knots and links, other braid-like groups give rise to knot-like objects by means of the closure operation: virtual braids produce virtual links, welded braids produce welded links and planar braids give rise to doodles (in the sense of Khovanov \cite{Kh} rather than Fenn and Taylor \cite{FT}). Many properties of these knot-like objects can be inferred from the properties of the corresponding braid-like groups although it is also true that some questions that are easy to answer for braids are hard (or still open) for links.

In the present note we introduce a new kind of knot-like objects that we call \emph{cactus doodles}. These are curves on a sphere that may have self-intersections, considered up to a number of moves that mimic the relations in the cactus group. These moves generalize the moves on usual doodles. 

We also establish some basic properties of the cactus doodles. Namely, we show that every cactus doodle arises as a closure of an element of a cactus group and prove that any two equivalent cactus doodles whose diagrams cannot be simplified can be obtained from each other by a sequence of moves that preserve the number of intersection points, and, possibly, a mirror reflection. The latter property is similar to the behaviour of the usual doodles and stands in contrast to the case of knots and links. As a corollary, we see that the usual doodles, considered up to  mirror reflections,  are a subset of the cactus doodles.

\subsection{Cactus groups}

For $n>0$, the \emph{cactus group} $J_n$ has the generators $s_{p,q}$, where $1\leq p< q\leq n$, and the following relations:
\[
\begin{array}{rcll}
s_{p,q}^2&=& 1,&\\
s_{p,q}s_{m,r}&=& s_{m,r}s_{p,q} &\quad \text{if\ } [p,q]\cap [m,r] =\emptyset,\\
s_{p,q}s_{m,r}&=& s_{p+q-r, p+q-m}s_{p,q} &\quad \text{if\ }  [m,r] \subset [p,q].
\end{array}
\]
There is a homomorphism of the cactus group $J_n$ onto the symmetric group $S_n$: it sends $s_{p,q}$ into the permutation of the ordered set $1<\ldots< n$ which reverses the order of $p, p+1,\ldots, q$ and leaves the rest of the elements unchanged. The kernel of this homomorphism is the fundamental group of the moduli space 
of \emph{stable} real rational curves with $n+1$ marked points; see, for instance \cite{EHKR} . 

Elements of $J_n$ can be interpreted as ``planar braids with self-intersections''; see \cite{MCact}. The braid corresponding to $s_{p,q}$ has one point, where $p-q+1$ strands meet, see Figure~\ref{example}. Then, the product in $J_n$ is simply the concatenation of braids. The relations have the form shown in Figure~\ref{relbr}.

\begin{figure}[ht]
$$\includegraphics[width=120pt]{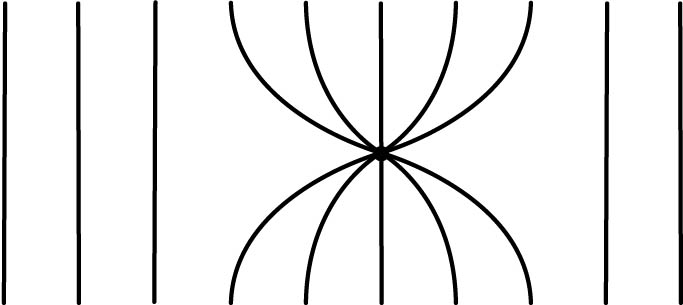}$$
\caption{A ``braid'' corresponding to $s_{4,8}$.}\label{example}
\end{figure}

\begin{figure}[ht]
$${\includegraphics[width=440pt]{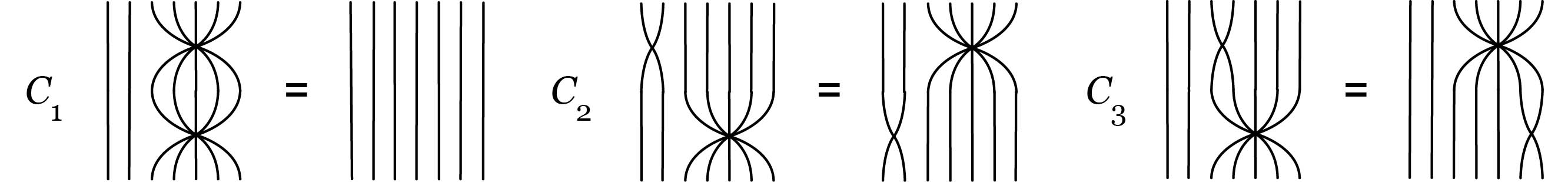}}$$
\caption{The relations in the cactus group}\label{relbr}
\end{figure}

In these terms, the homomorphism $J_n\to S_n$ can be described as follows: the permutation defined by a braid is obtained by following its strands. Braids whose self-intersections are all at different heights are in one-to-one correspondence with the words in the generators $s_{p,q}$. 

\newpage

\subsection{Doodles and cactus doodles}

Recall that a doodle is an immersed closed curve in $S^2$ whose only multiple points are transversal self-intersections. Two doodles are equivalent if they can be transformed into each other by smooth isotopies of $S^2$ and moves of the following form: 
\medskip

$$\includegraphics[width=220pt]{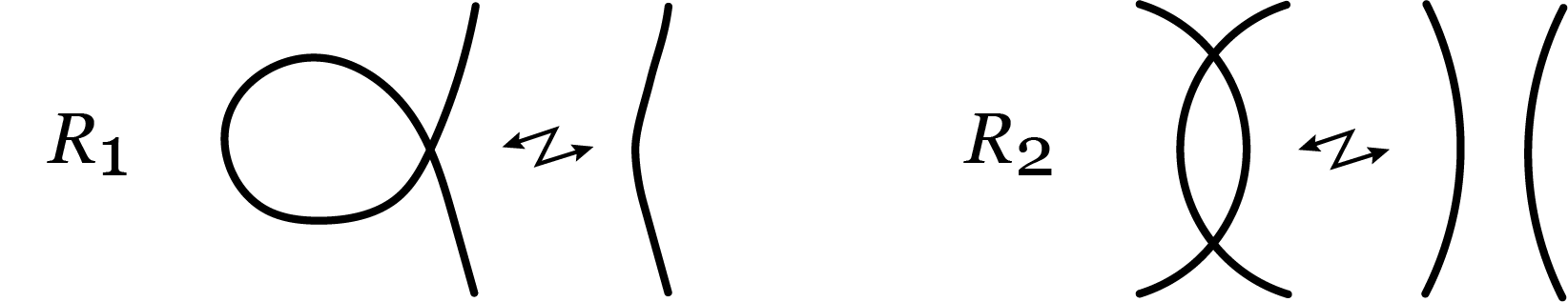}$$
\medskip

This terminology is due to Khovanov \cite{Kh}. It is a modification of the definition of Fenn and Taylor who introduced doodles in  \cite{FT}. In what follows, we consider the doodles up to isotopies and the move $R_2$ only.

A \emph{cactus doodle} is an immersed closed curve in $S^2$ whose tangent lines at each $k$-tuple point are all distinct:

$$\includegraphics[width=100pt]{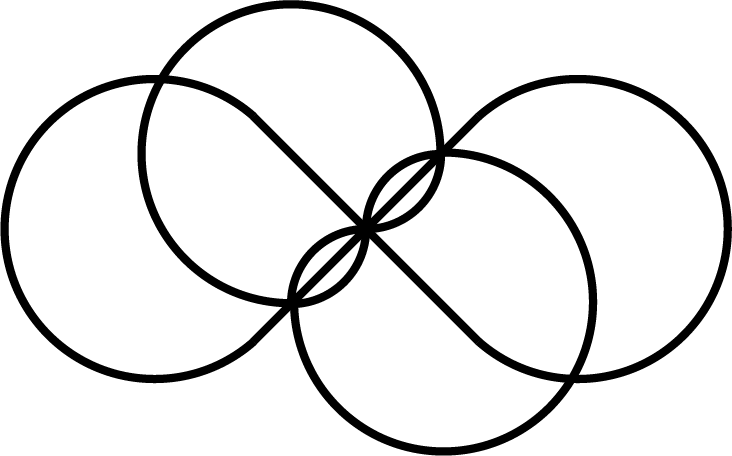}$$

We say that a cactus doodle has one component if it is parametrized by a circle (rather than several circles).

\medskip

Cactus doodles are considered modulo smooth isotopy of $S^2$ and up to \emph{elementary} moves of two types. The moves of type $\Phi$ are analogous to the relations $C_1$ in the cactus group;  the move $\Phi_n$ is defined for each $n>1$ and annihilates (or creates) a pair of $n$-tuple intersection points as in Figure~\ref{phi}.
\begin{figure}[h]
$$\includegraphics[width=440pt]{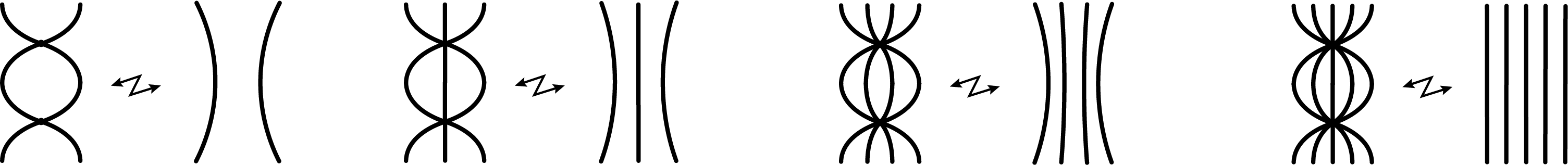}$$
\caption{The moves $\Phi_2, \Phi_3,\Phi_4$ and $\Phi_5$}\label{phi}\end{figure}

The elementary moves of type $\Psi$ are similar to the relations $C_3$ in the cactus group. For every pair $k,n$ with $1<k<n$ a move of type $\Psi_{k,n}$ passes a $k$-tuple intersection point through an $n$-tuple intersection point. There are $n-k+1$ moves of type $\Psi_{k,n}$, see Figures~\ref{psi1} and \ref{psi2}.

\begin{figure}[h]
$$\includegraphics[width=200pt]{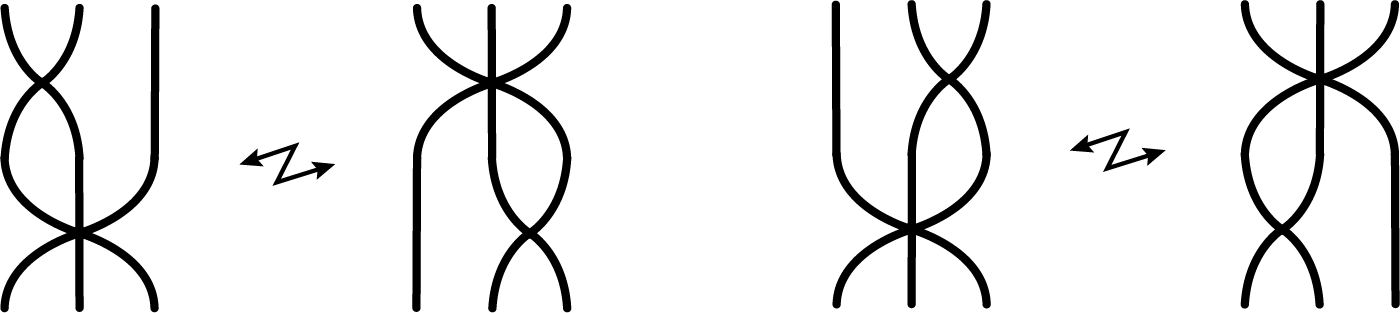}$$
\caption{The moves of type $\Psi_{2,3}$}\label{psi1}\end{figure}
\begin{figure}[h]
$$\includegraphics[width=330pt]{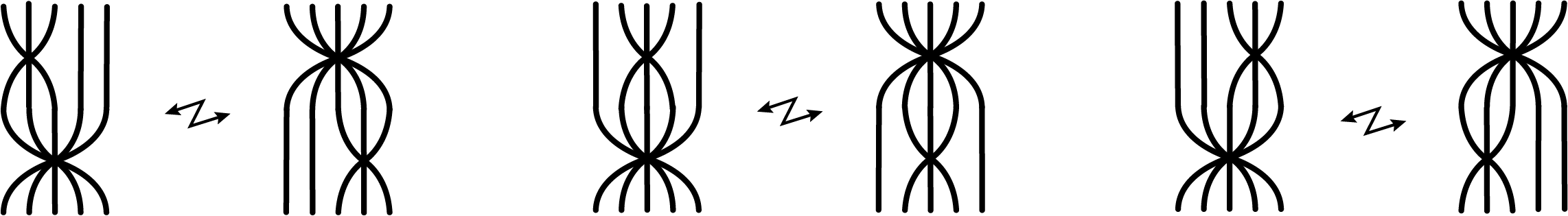}$$
\caption{The moves of type $\Psi_{3,5}$}\label{psi2}\end{figure}

 The branches of the doodle in each of the pictures may have arbitrary orientations. By saying ``cactus doodle'' we will sometimes mean the geometric curve (\emph{geometric} cactus doodle) and sometimes its equivalence class. This is similar to the usage of the term ``knot'' in knot theory and should not lead to confusion.

If, in the above definitions, we only consider immersed curves without $k$-tuple points for $k>2$, we arrive to Khovanov's definition of doodles. 

Elements of $J_n$ produce cactus doodles by joining the upper and the lower ends of each strand so that no new intersection points are created:

$$\includegraphics[width=150pt]{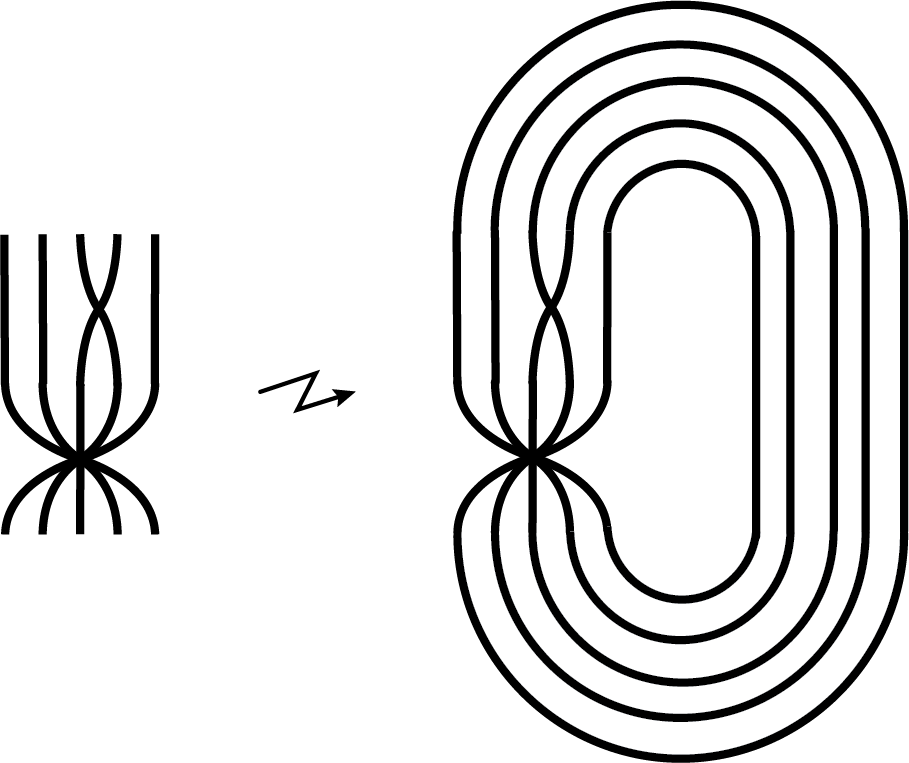}$$
\medskip

\noindent Here, we assume that all the strands of the planar braids are oriented, say, downwards.
\begin{thm}\label{main} Each cactus doodle is equivalent to the closure of some element of $J_n$.
\end{thm}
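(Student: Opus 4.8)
The plan is to prove a version of Alexander's braiding theorem. First I would reformulate what it means to be a closure. Fix a point $O\in S^2$ lying off the curve, together with the angular coordinate $\theta$ measured around $O$ on $S^2\setminus\{O\}$, and call a diagram \emph{radially monotone} if, for a suitable orientation of its components, $\theta$ strictly increases along every branch away from the multiple points. Cutting such a diagram along a generic radial ray $\rho$ from $O$ to a second, antipodal point $O'$ (chosen to meet the curve transversally in $n$ points and to avoid all multiple points and all $\theta$-critical points) turns it into a planar braid on $n$ strands whose only singularities are the $k$-tuple points; placing these at distinct values of $\theta$ lets us read off a word in the generators $s_{p,q}$, so that a radially monotone diagram is literally the closure of an element of $J_n$. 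Thus it suffices to show that every cactus doodle is equivalent to a radially monotone one.

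Next I would put the diagram in general position with respect to the foliation of $S^2\setminus\{O,O'\}$ by radial rays: the curve avoids $O$ and $O'$; each $k$-tuple point occurs at a distinct radius and angle with all its branches transverse to the rays; and the restriction of $\theta$ to the curve is Morse with isolated, distinct critical values. The obstruction to radial monotonicity is then measured by the critical points of $\theta$, equivalently by the arcs along which $\theta$ decreases (the \emph{bad} arcs). I would eliminate these one at a time by Alexander's device of throwing a bad arc over the far point $O'$: a short bad arc is replaced by an arc with the same endpoints that winds the opposite (now correct) way around $O$, swept across a thin sliver passing through a neighborhood of $O'$. This reverses the sign of the angular excursion and removes a pair of $\theta$-critical points.

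The heart of the argument, and the step I expect to be the main obstacle, is to check that each such throwing-over is realized by an ambient isotopy of $S^2$ together with the elementary moves $\Phi$ and $\Psi$, rather than by an arbitrary homotopy. As the sliver sweeps across the rest of the diagram it meets it only in controlled local models: crossing a single strand transversally, away from the multiple points, creates or removes a bigon, which is exactly a move of type $\Phi_2$; passing across a cluster of strands meeting at an existing $n$-tuple point is a move of type $\Psi_{2,n}$ as in Figure~\ref{psi1} and Figure~\ref{psi2}, possibly preceded or followed by the creation of a canceling pair of multiple points via $\Phi_n$ as in Figure~\ref{phi}. The bookkeeping required is to subdivide the bad arc finely enough that at every instant the moving strand interacts with at most one feature of the diagram, and to verify that each elementary interaction keeps all the tangent lines at the multiple points distinct, so that every intermediate picture is again a legitimate cactus doodle. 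Granting this local analysis, the throwing-over is a finite composition of elementary moves and an isotopy.

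Finally I would argue termination: each throwing-over strictly decreases the number of $\theta$-critical points, a nonnegative integer, so after finitely many steps the diagram is radially monotone. Components of zero or negative winding number around $O$ are absorbed into this process, since throwing arcs over $O'$ also adjusts winding, and a reversal of orientation accounts for the freedom in choosing the closure. Applying the reduction of the first paragraph to the resulting radially monotone diagram then exhibits the original cactus doodle as the closure of an explicit element of some $J_n$.
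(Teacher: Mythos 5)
Your overall strategy is the right one and is indeed the one the paper follows (Alexander's trick in Khovanov's form: put the curve in radial position, then throw the clockwise arcs over a pole), but two steps that you treat as routine are precisely where the new difficulties of cactus doodles lie, and as written both fail. First, your general-position claim --- that an isotopy makes every $k$-tuple point have ``all its branches transverse to the rays'' with $\theta$ increasing along each --- is false when $k\geq 3$. All branches at a multiple point move counterclockwise around $O$ if and only if all their tangent vectors lie in the open half-plane determined by the radial direction, i.e.\ the point is \emph{coherent} in the paper's terminology; coherence is a diffeomorphism-invariant local property, so a non-coherent point (say a triple point with tangent vectors at angles $0^\circ$, $120^\circ$, $240^\circ$) can never be put in radially monotone position by isotopy, and a ``bad arc'' ending at such a point is pinned there and cannot be thrown anywhere. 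This is exactly what the paper's Lemma~\ref{coherent} repairs, by genuinely using elementary moves (reordering adjacent branches at the multiple point at the cost of creating extra double points) before the Alexander argument even starts; your proposal has no counterpart of this step.

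Second, your treatment of the sweep itself rests on a misreading of the moves: passing the moving strand ``across a cluster of strands meeting at an existing $n$-tuple point'' is \emph{not} a move of type $\Psi_{2,n}$. The moves $\Psi_{k,n}$ encode the nested relation $C_3$, so the $k$-tuple point being pushed through must be formed by $k$ of the same $n$ strands that meet at the $n$-tuple point; an external strand crossing an $n$-tuple point momentarily creates an $(n+1)$-tuple point and is not an elementary move at all. For $n=2$ it is the Reidemeister~III move, which is provably not a consequence of the elementary moves: if it were, any two doodles related by $R_3$ would be equivalent as cactus doodles, hence (by the Corollary to Theorem~\ref{min}) equivalent as doodles under $R_2$ alone, contradicting the classical nontriviality of doodles such as the Fenn--Taylor configuration. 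So the ``local analysis'' you defer is not bookkeeping --- it is an impossibility. The paper's proof is organized specifically to avoid it: the bad interval is subdivided according to whether the special points on the rays through each piece are \emph{inner} or \emph{outer}, and each piece is then pushed towards $\infty$ or towards $0$ respectively, through a region containing \emph{no} special points whatsoever, so that the entire throw-over uses only isotopies and $\Phi_2$ moves and never sweeps across any multiple point.
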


A cactus doodle $\mathcal{C}'$ is a \emph{mirror image} of a cactus doodle $\mathcal{C}$ if it is obtained from $\mathcal{C}$ by a reflection with respect to some line.  The above theorem has the following consequence whose proof is illustrated by Figure~\ref{flip}:

\begin{cor} Each cactus doodle is equivalent to its mirror image.
\end{cor}

\begin{figure}[ht]
\includegraphics[width=350pt]{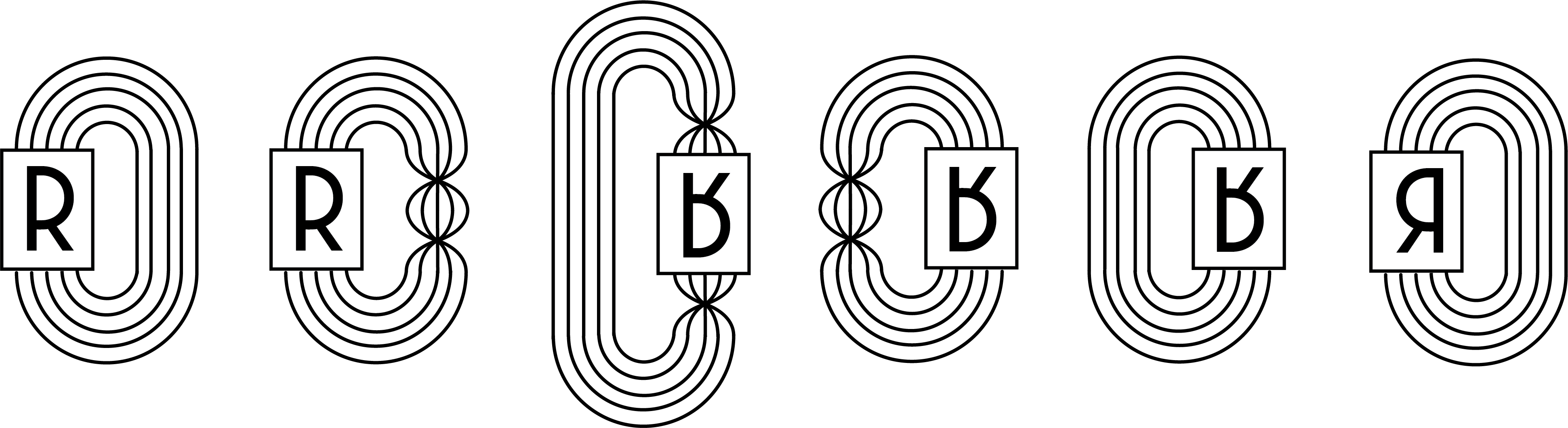}
\caption{The equivalence between a closure of an element of $J_n$ and its mirror image}\label{flip}
\end{figure}

We should point out that a usual doodle is not always equivalent to its mirror image. In particular, there exist non-equivalent doodles which are equivalent as cactus doodles. By Theorem~\ref{min} below, if such doodles consist of one component, they are necessarily mirror images of each other.

\medskip

The problem of distinguishing doodles, unlike the problem of distinguishing knots, is easy. Call a doodle \emph{minimal} if none of its intersection points can be removed by an elementary move. It is easy to show that any doodle is equivalent to precisely one minimal doodle; in order to construct this minimal doodle, one simply  removes all the intersection points that can be removed.

The situation with cactus doodles turns out to be broadly similar, although the minimal doodle is not necessarily unique.  
Call a cactus doodle \emph{minimal} if any sequence of moves that reduces its number of intersection points must first increase its number of intersection points.

\begin{thm}\label{min} If two equivalent cactus doodles with one component are minimal, they can be transformed into each other by elementary moves of type $\Psi$ and mirror reflections. In particular, all equivalent minimal doodles have the same number of intersection points.
\end{thm}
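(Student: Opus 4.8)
The plan is to grade cactus doodles by the number $N$ of their multiple points and to exploit the fact that the two families of elementary moves behave very differently with respect to this grading: a move of type $\Psi_{k,n}$ preserves $N$ (as does ambient isotopy), whereas a move of type $\Phi_n$ changes it. In these terms the theorem asserts that, within one equivalence class, the diagrams minimising $N$ form a single subgraph connected by the $N$-preserving operations, and that these are exactly the minimal diagrams. I would first record the easy half: starting from any geometric cactus doodle and applying, whenever it is available, a reduction that does not increase $N$ (sliding multiple points past one another by $\Psi$-moves until a $\Phi_n$-annihilation can be performed), one strictly lowers $N$; since $N$ is a nonnegative integer, after finitely many such rounds one reaches a minimal doodle. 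Hence every doodle reduces, weakly monotonically in $N$, to a minimal one.

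The substance of the theorem is the comparison of two minimal doodles $D$ and $D'$ in the same class. Fix any sequence of elementary moves $D=E_0\to E_1\to\cdots\to E_m=D'$ and consider its profile $i\mapsto N(E_i)$, which is constant across $\Psi$-moves and isotopies and changes only at $\Phi$-moves. Since the only $N$-preserving operations are $\Psi$-moves and isotopy, it suffices to rewrite the sequence until its profile is constant, for then it uses $\Psi$-moves only and realises the required transformation. I would reduce this to a \emph{peak-elimination} statement: whenever the profile has an interior maximum --- a $\Phi_n$-creation, then a (possibly long) stretch of moves, then a $\Phi$-annihilation, all taking place at or below the created level --- the portion realising this peak can be replaced by one of strictly lower maximum, without disturbing the endpoints. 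Iterating peak-elimination and using that $N$ is bounded below, the profile is flattened to the common level of $D$ and $D'$; because both are minimal, i.e.\ local minima of $N$ that cannot be lowered without first being raised, the flattened profile must in fact be constant, which is exactly the assertion.

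The heart of the matter, and the step I expect to be the main obstacle, is the peak-elimination lemma, which is a confluence property for the moves. Its proof requires tracking the pair of multiple points created by the opening $\Phi_n$-move as they are slid about by the intervening $\Psi$-moves and isotopies, and showing that this pair can either be cancelled directly against the pair removed by the closing $\Phi$-annihilation, or else be commuted past the intervening moves so that the creation and annihilation become adjacent and cancel trivially. Operations supported in disjoint discs of $S^2$ manifestly commute, and a $\Phi_n$-creation immediately followed by the annihilation of the same pair is the identity up to isotopy; the real work lies in the cases where the created points genuinely interact, through $\Psi$-moves, with the configuration that is later annihilated. This is the analogue, in the present geometric setting, of the exchange condition underlying Matsumoto's theorem for Coxeter groups, and it is governed by the same dichotomy between the length-changing relation $s_{p,q}^2=1$ (mirrored by $\Phi$) and the length-preserving relations $C_3$ and the far commutation $C_2$ (mirrored by $\Psi$ and by isotopy).

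An alternative, more algebraic route would use Theorem~\ref{main} to present $D$ and $D'$ as closures of words in the generators $s_{p,q}$, with the word length equal to $N$. Under this dictionary the $\Phi$-moves correspond to insertions and deletions of the relators $s_{p,q}^2$, while the $\Psi$-moves and far commutations correspond to the length-preserving relations $C_3$ and $C_2$. The theorem then becomes a Matsumoto--Tits-type statement, combined with the appropriate Markov moves for closures: any two geodesic representatives of the same closed-up element are connected using only the length-preserving relations. I would expect the geometric and the group-theoretic formulations of the key lemma to be essentially interchangeable, and either can be taken as the core of the argument.
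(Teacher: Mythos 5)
Your reduction of Theorem~\ref{min} to a peak-elimination statement is exactly the strategy of the paper (its Proposition~\ref{xz}: a $\Phi$-creation, followed by moves of type $\Psi$, followed by a $\Phi$-annihilation, cannot be an unavoidable excursion above the base level), and your observation that minimality then forces the flattened profile to be constant is also how the paper concludes. The gap is that you never prove the peak-elimination lemma, and the dichotomy you propose for it --- either the created pair cancels directly against the annihilated pair, or the creation is commuted past the intervening moves so that creation and annihilation become adjacent --- is too naive to cover what actually happens. The intervening $\Psi$-moves generically involve the created intersection points in an essential way (other multiple points are passed through them, which scrambles the local configuration), so the creation cannot be commuted past them; and the pair that is finally annihilated need not coincide with the created pair, even partially. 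This interaction case, which you explicitly set aside as ``the real work,'' is the entire content of the paper's argument. The paper handles it by passing to Gauss diagrams (with Lemma~\ref{dia} and Lemma~\ref{rea} guaranteeing that the diagram-level moves are realizable by moves of geometric doodles) and then performing surgery on each intermediate diagram $D_i$ of the peak: erase the created singular sets $A$, $A'$ and simultaneously reverse the oriented cyclic orders on the portions of the other singular sets lying on the segments joining the points of $A$ to those of $A'$ --- this reversal is precisely what compensates for the $\Psi$-moves in which $A$ and $A'$ participated. One then verifies that the surgered diagrams form a lower-level sequence connected by $\Psi$-moves, in three separate cases according to whether the annihilated pair $\{B,B'\}$ equals the created pair $\{A,A'\}$, shares exactly one label with it, or is disjoint from it; the last two cases require three parallel surgered sequences, glued by relabelling or by lower-level $\Phi$-moves. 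None of this bookkeeping is present, or even hinted at, in your sketch; it is the proof, not a routine verification.

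Your alternative algebraic route does not close the gap either. It requires a Markov-type theorem characterizing when two elements of cactus groups have equivalent closures, and the paper explicitly lists that as an open problem; and a Matsumoto--Tits statement for cactus groups is not a consequence of the Coxeter-group case, since the relations $s_{p,q}s_{m,r}=s_{p+q-r,p+q-m}s_{p,q}$ are not braid relations and the cactus group is not a Coxeter group. As it stands, that route replaces one unproved statement by two.
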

 


There are many natural questions that can be asked about cactus doodles. The first of these is the precise description of the closure map; in other words, the Markov-type theorem for cactus doodles (for usual doodles it has been established by Gotin \cite{Got}). Another question is whether there exists a ``fundamental group'' for cactus doodles similar to the one constructed for doodles by Khovanov \cite{Kh}. One may look for other invariants such as the Polyak-Viro-type invariants which classify doodles as shown by Merkov \cite{Merk}, or for polynomial-type invariants such as that of \cite{Juy}. Finally, one may wonder whether cactus doodles arise in some other context. We do not attempt to answer any of these questions here. 

\section{Proof of Theorem~\ref{main}}

Call an intersection point of a geometric cactus doodle \emph{coherent} if there exists a vector whose scalar product  with any tangent vector to the cactus doodle at this point is positive. In the figure below, the intersection point on the left is coherent while the one on the right is not:

$$\includegraphics[width=80pt]{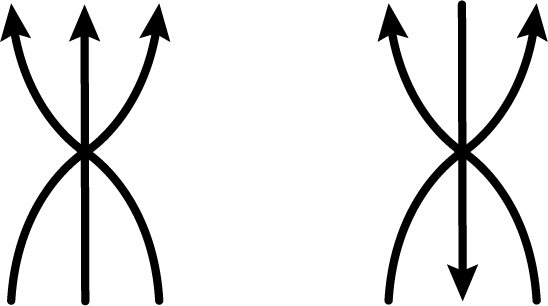}$$
\medskip

Any intersection point of two branches of a cactus doodle is coherent. Also, observe that a geometric cactus doodle obtained by closing a planar braid only has coherent intersection points.

\begin{lem}\label{coherent} All intersection points of a cactus doodle can be made coherent by an application of several elementary moves.
\end{lem}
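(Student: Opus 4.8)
The plan is to measure how far a diagram is from being coherent and to remove this defect one arc at a time, imitating the procedure by which a link diagram is turned into a closed braid. First I would record, as already noted in the text, that every double point is automatically coherent, so that only the $k$-tuple points with $k\geq 3$ can be bad; such a point is coherent exactly when its $k$ oriented tangent directions fit into an open half-plane. I would then fix a point $O\in S^2$ away from the doodle together with a smooth angular coordinate about $O$, and call an arc of the doodle \emph{backward} if it runs clockwise about $O$. If there are no backward arcs, then at every multiple point all branches turn the same way about $O$, so the outgoing tangent directions all have positive angular component and the point is coherent; thus it suffices to remove all backward arcs, and I would take the number (or total angular length) of backward arcs as the complexity to be driven to zero.

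The key step is to take a maximal backward arc $\alpha$ and sweep it across the antipode of $O$, after which $\alpha$ runs forward about $O$. Since the doodle lies on $S^2$ and the antipode carries no part of the curve, this sweep is an ambient motion of $\alpha$ over the far hemisphere, and the crossing events it produces are precisely the elementary moves: when $\alpha$ is pushed over $n-1$ strands simultaneously a pair of $n$-tuple points is created or cancelled, which is a move of type $\Phi_n$, and when $\alpha$ drags a $k$-tuple point across an existing $n$-tuple point the corresponding transition is a move of type $\Psi_{k,n}$. Because $\Phi_n$ is available for every $n>1$ and $\Psi_{k,n}$ for all $1<k<n$, each event along the sweep is legal, so the entire push is realized by a finite composition of elementary moves, with no $R_1$-type move and no sliding of a point past one of equal multiplicity ever required.

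Iterating, the complexity drops until no backward arc remains and every multiple point is coherent. The hard part will be the bookkeeping inside the key step: I must check that a backward arc which itself carries $k$-tuple points can always be swept by a sequence of $\Phi$ and $\Psi$ moves, and, most importantly, that the sweep genuinely converts the bad points it meets into coherent ones rather than merely transporting them elsewhere. Pinning down a complexity that provably decreases at each push, so that the procedure terminates, and verifying that the orientations at the multiple points met along $\alpha$ really do end up confined to a half-plane after the push, is where the real work lies; the availability of $\Phi_n$ and $\Psi_{k,n}$ for all admissible indices is exactly what makes the sweep expressible in elementary moves.
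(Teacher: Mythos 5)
There is a genuine gap, and it is structural rather than a matter of unfinished bookkeeping. Your sweep-over-the-antipode argument is essentially the paper's proof of Theorem~\ref{main}, where it is applied \emph{after} coherence has already been arranged; using it to prove Lemma~\ref{coherent} itself gets the logical order backwards. The precise failure is this: coherence is a property of the curve germ at the intersection point that is invariant under ambient isotopy (the tangent vectors transform by the derivative of the diffeomorphism, a linear isomorphism, which preserves the ``all vectors in an open half-plane'' condition). A sweep is an isotopy except at the discrete moments where moves occur, and those moves happen where the swept arc crosses \emph{other} strands, not at the bad point; so the sweep can only transport a non-coherent point, never repair it. Worse, a non-coherent point always has at least one branch running clockwise about $O$, no matter where you place $O$ and no matter what isotopy you apply (all branches counterclockwise would put all tangent vectors in a half-plane). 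Hence every bad point sits on one of your backward arcs, and sweeping that arc would require either detaching a single branch from a $k$-tuple point or pushing a lone strand across an existing intersection point. Neither is an elementary move: $\Phi_n$ creates or annihilates a \emph{pair} of $n$-tuple points on the same $n$ strands, and $\Psi_{k,n}$ requires an actual $k$-tuple point with $2\leq k<n$, all of whose branches move together. (This is also why your claim that generic sweep events are $\Phi_n$ moves for $n>2$ is off: a generic sweep of an embedded arc produces only tangency events, i.e.\ $\Phi_2$, plus the forbidden strand-through-intersection events.) So the sweep cannot even start on precisely the arcs that carry the defect, which is why this move set --- unlike the Reidemeister calculus --- does not trivially ``braid'' everything.

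The actual content of the lemma, which your proposal defers to the ``hard part,'' is a purely local rearrangement at the bad point, and that is what the paper does: a short explicit sequence of elementary moves (a $\Phi_2$ creating two double points, combined with $\Psi$-type moves, as in Figure~\ref{coherent}'s accompanying picture) interchanges two \emph{adjacent branches} of the doodle at an intersection point, at the cost of introducing two new double points. Double points are automatically coherent (two non-parallel vectors always fit in an open half-plane), so the only possible damage is harmless; and since adjacent transpositions generate all rearrangements of the branches around the point, finitely many such interchanges make the original point coherent. No global complexity, choice of $O$, or sweep is needed. If you want to salvage your outline, the local branch-interchange step is exactly the missing ingredient you would have to prove first --- at which point the global sweep becomes unnecessary for this lemma and reappears, legitimately, in the proof of Theorem~\ref{main}.
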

\begin{proof}
Indeed, the following sequence of moves:

$$\includegraphics[width=150pt]{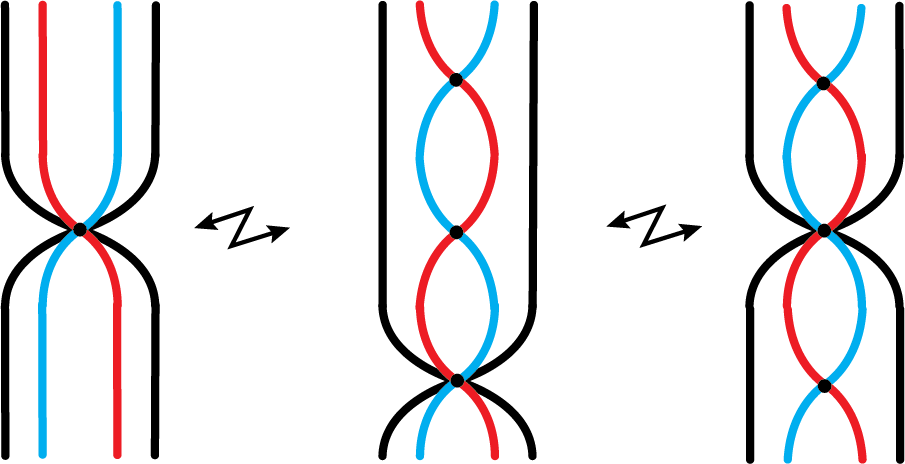}$$

\noindent interchanges two adjacent branches of a cactus doodle at an intersection point and adds two double intersection points. Interchanging different adjacent branches of the cactus doodle repeatedly, we can obtain a picture where the original intersection point becomes coherent and several double intersections are added. These latter are always coherent.
\end{proof}

Think of $S^2$ as $\mathbb{R}^2\cup\{\infty\}$ and write $0$ for the origin in $\mathbb{R}^2$.  
A \emph{special point} on a cactus doodle $\mathcal{C}$ is either an intersection point of $\mathcal{C}$ or a tangency point of $\mathcal{C}$ with some ray emanating from 0.

Given a cactus doodle $\mathcal{C}$, without loss of generality we can assume that:
\begin{enumerate}
\item each intersection point of  $\mathcal{C}$ is coherent;
\item neither $0$ nor $\infty$ lie on $\mathcal{C}$;
\item $\mathcal{C}$ only has a finite number of special points;
\item there is at most one special point on any ray emanating from 0;
\item any tangent vector at any intersection point of $\mathcal{C}$ is directed counterclockwise with respect to 0.
\end{enumerate}
Indeed, (1) follows from Lemma~\ref{coherent}, (2)--(4) can be achieved for any $\mathcal{C}$ by an arbitrarily small isotopy of $S^2$. As for (5), an isotopy that achieves it, rotates, if necessary, each intersection point so that the tangent vectors point in the desired direction; it only moves points in a small neighbourhood of the intersection; see Figure~\ref{turn}.
\begin{figure}[h]
$$\includegraphics[width=200pt]{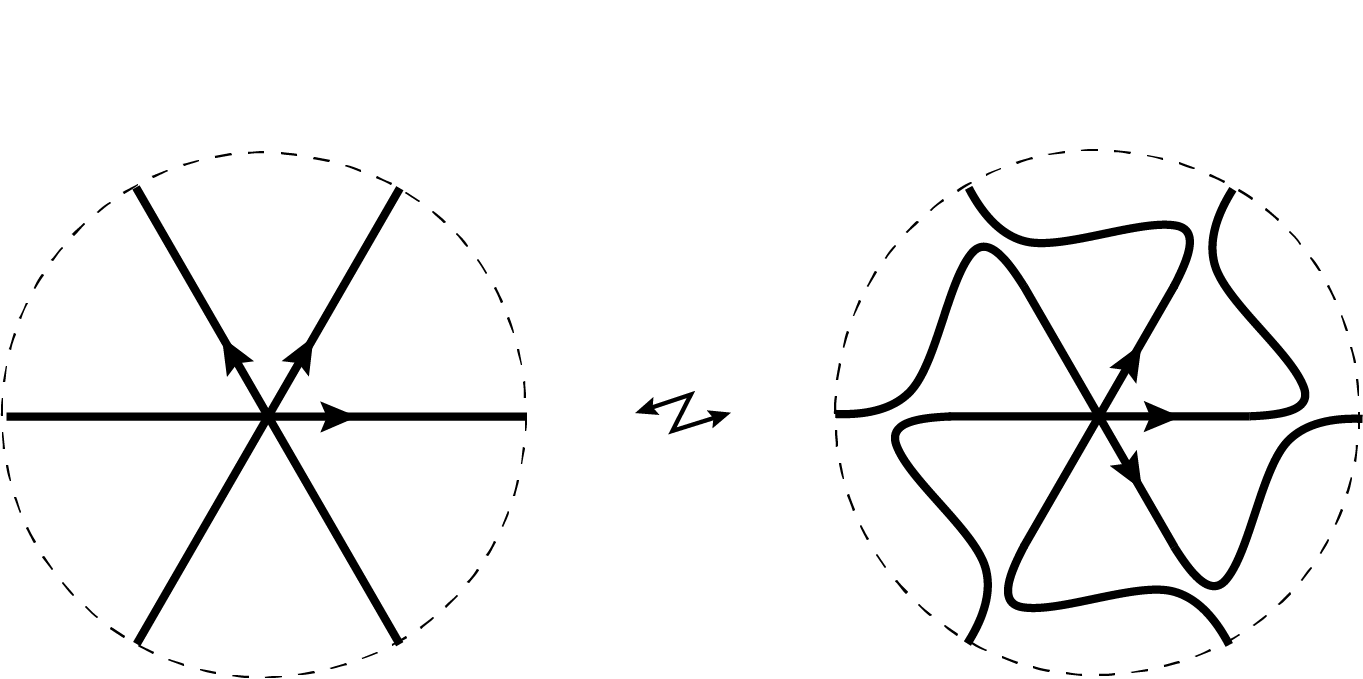}$$
\caption{Rotating an intersection point}\label{turn}\end{figure}
The rest of the proof follows Khovanov's argument in \cite{Kh}. 

If $\mathcal{C}$ has no point where the tangent vector points in the clockwise direction with respect to 0, it is a closure of a ``planar braid''. The ``braid'' depends on the choice of a ray emanating from 0 without special points of  $\mathcal{C}$ on it, see Figure~\ref{braid}.
\begin{figure}[h]
$$\includegraphics[width=200pt]{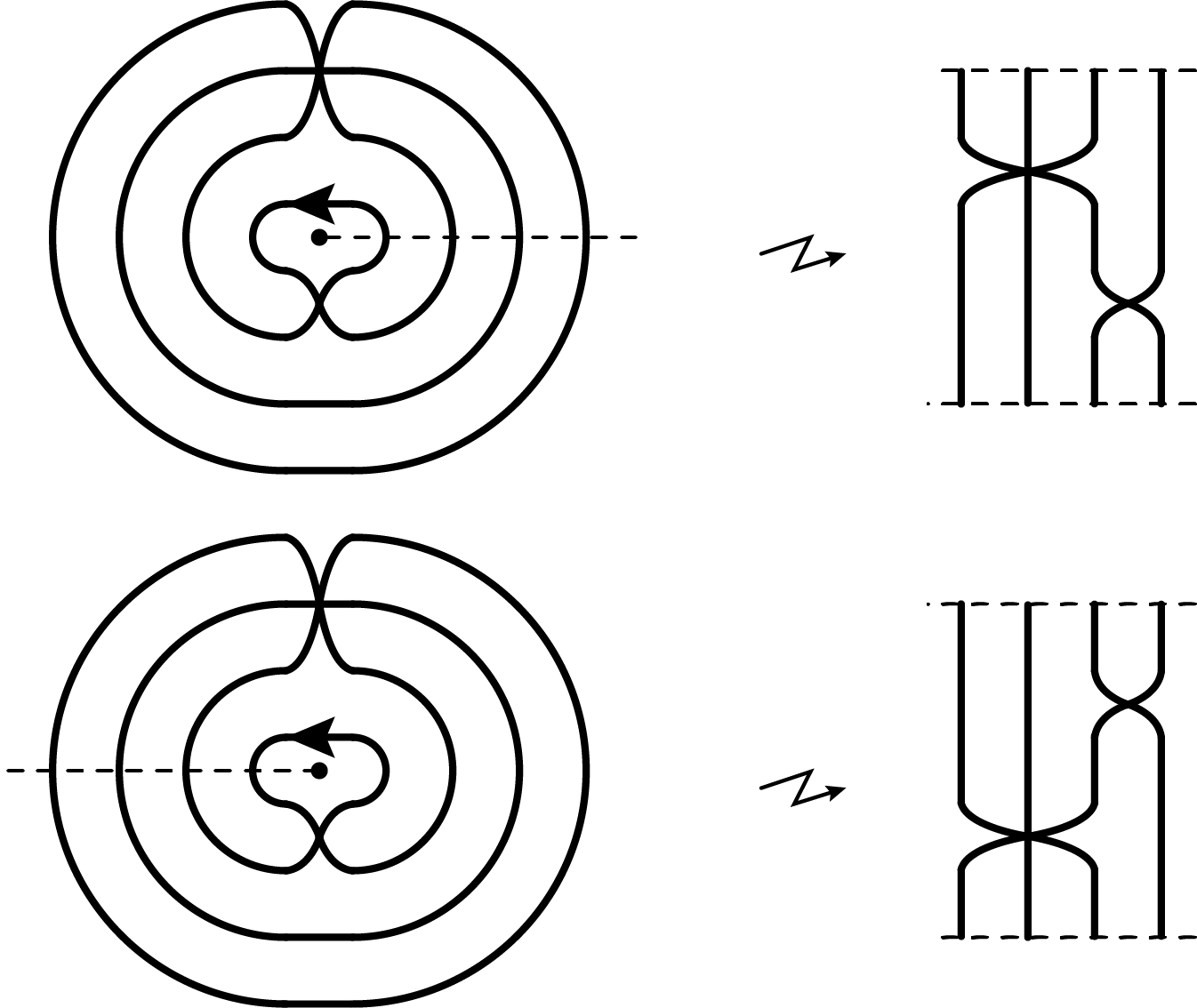}$$
\caption{Obtaining an element of the cactus group from a cactus doodle}\label{braid}\end{figure}
In general, the set of points of $\mathcal{C}$ where the tangent vector points in the clockwise direction with respect to 0 is the union of a finite number of open intervals $I_1,\ldots, I_n$ whose endpoints are (some of the) special points of $\mathcal{C}$. Note that the interiors of these intervals do not contain special points of $\mathcal{C}$. 
We will show how to modify $\mathcal{C}$ on $I_n$ by isotopies and elementary moves in such a way that the resulting cactus doodle has the tangent vector in the clockwise direction on $I_1,\ldots, I_{n-1}$ only. 

Let $A$ be  a special point of $\mathcal{C}$ such that the ray $0A$ intersects $I_n$. We say that $A$ is \emph{inner} if it lies between $0$ and the intersection of $0A$ with $I_n$ and \emph{outer} otherwise. In addition, if  $A$ is the initial  (final) endpoint of $I_n$, we call it inner if the tangent vector to  $\mathcal{C}$ at $A$ points towards 0  (towards infiitity, respectively). An endpoint of $I_n$ is outer if it is not inner.

Subdivide the interval $I_n$ by points $B_0, B_1, \ldots, B_r$ into open intervals $B_iB_{i+1}$ in such a way that 
\begin{enumerate}
\item $B_0$ is the initial and $B_r$ the final endpoint of $I_n$ and all the $B_iB_{i+1}$ are pairwise disjoint;
\item none of the points $B_i$ with $0<i<r$ lies on a ray $0A$ where $A$ is special;
\item the closure $\overline{B_iB}_{i+1}$ of ${B_iB_{i+1}}$ intersects at least one ray $0A$ where $A$ is special, for each $0\leq i< r$;
\item if $\overline{B_iB}_{i+1}$ intersects several rays $0A$ with $A$ special, such $A$ are either all inner or all outer;  
\item if $\overline{B_iB}_{i+1}$ with $0\leq i< r-1$ intersects a rays $0A$ with $A$ inner, $\overline{B_{i+1}B}_{i+2}$ intersects $0A$ with $A$ outer and vice versa.
\end{enumerate}
These definitions are illustrated in Figure~\ref{Kh1}. 
\begin{figure}[h]
$$\includegraphics[width=200pt]{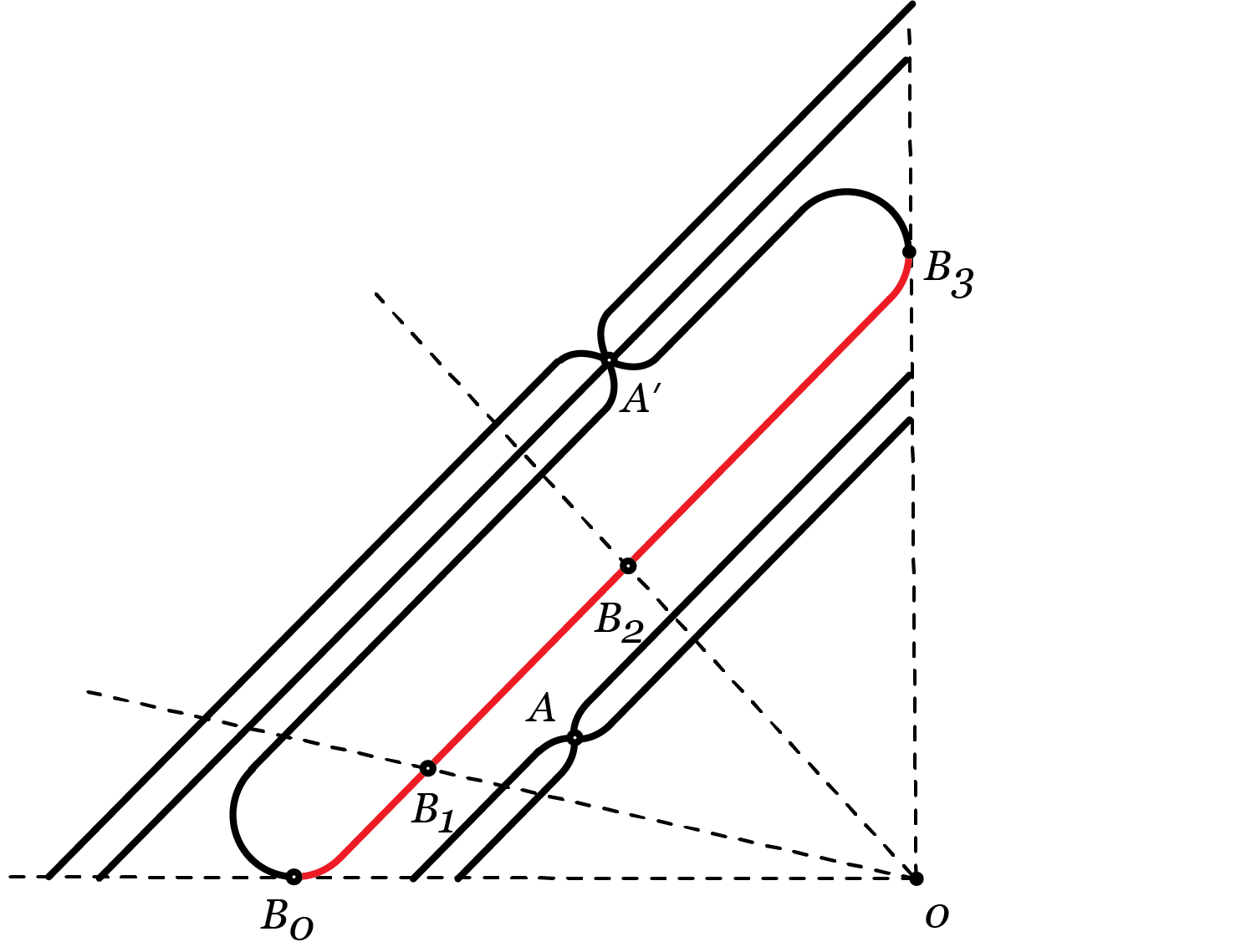}$$
\caption{The figure shows the part of a cactus doodle contained between the rays $0B_0$ and $0B_3$. The interval $I_n$ is shown in red. The special points $B_0$, $A'$ and $B_3$ are outer, $A$ is inner.}\label{Kh1}\end{figure}

Now, if $B_iB_{i+1}$ intersects $0A$ with $A$ inner, this interval can be pushed towards infinity by means of isotopies and elementary moves $\Phi_2$. Likewise,  if $B_iB_{i+1}$ intersects $0A$ with $A$ outer, it can be pushed towards 0 as in Figure~\ref{Kh2}. By pushing it over infinity or over 0 one then achieves the change of direction of the whole $I_n$: instead of turning around 0 in the clockwise direction (say, by an angle $\alpha$), the modified interval turns around 0 in the counterclockwise direction by the angle $2\pi r-\alpha$.

\begin{figure}[h]
$$\includegraphics[width=300pt]{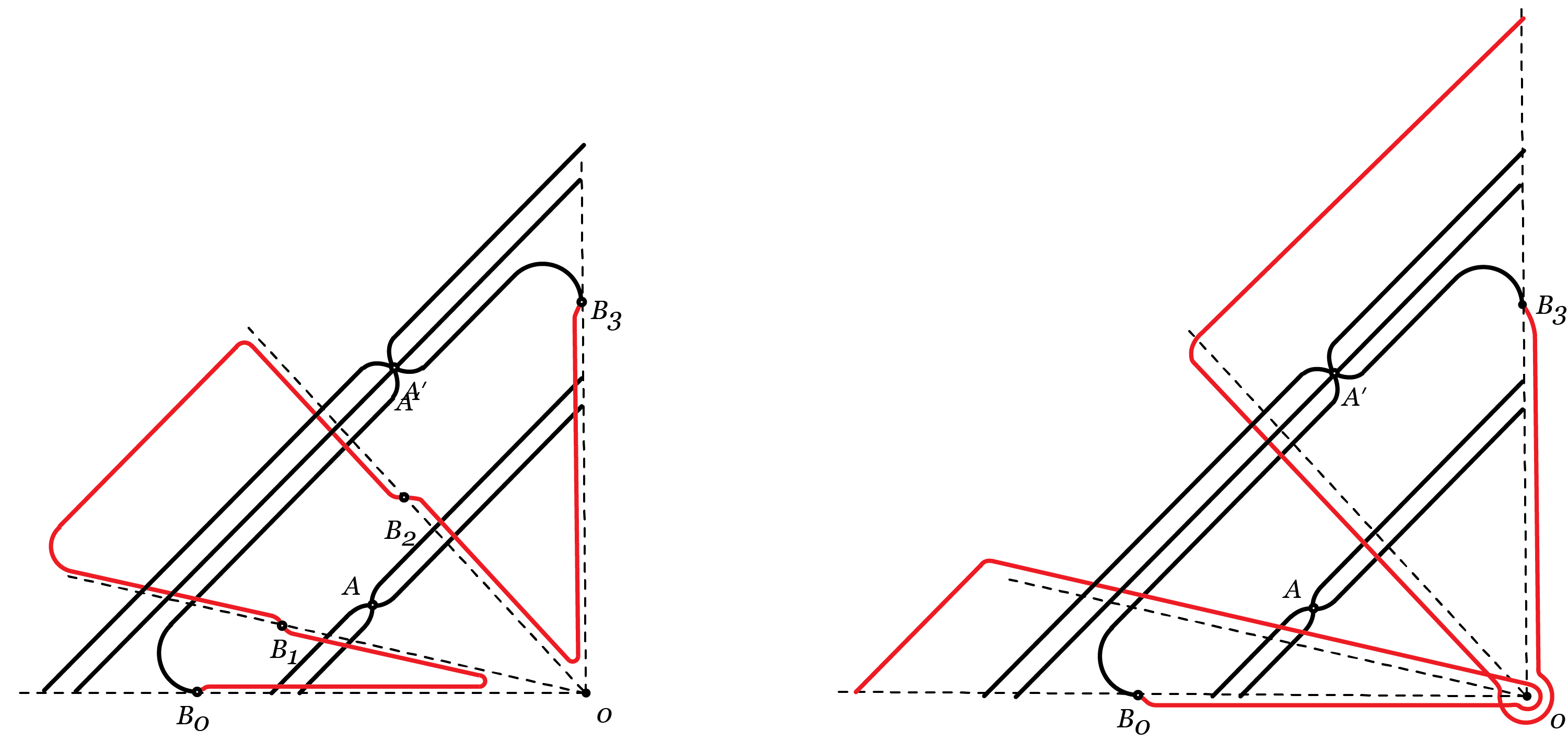}$$
\caption{Changing the direction of the interval $I_n$}\label{Kh2}\end{figure}

\section{Gauss diagrams of cactus doodles}

A usual geometric doodle $S^1\to S^2$ can be reconstructed, up to isotopy, from its \emph{Gauss diagram}. This diagram consists of $S^1$ on which each pair of the preimages of each intersection point is connected with a directed chord. At each intersection point, the branches of the doodle are ordered in such a way that their tangent vectors form a positive basis; the arrow on the chord goes from the first branch to the second branch. Gauss diagrams of knot projections are widely used in knot theory; note, however, that the direction of an arrow in the Gauss diagram of a knot projection has a different meaning. According to Arnol'd \cite{Ar}, abstract Gauss diagrams were first studied by Gauss who was interested in the question of their realizability by immersed plane curves. 
\medskip

The definition of a Gauss diagram can be extended to cactus doodles. 
Given a geometric cactus doodle $C$ parametrized by a union of circles $M=S^1\sqcup\ldots\sqcup S^1$, a \emph{rough diagram of $C$} is a copy of $M$ (called the \emph{skeleton} of the diagram) together with a finite set of marked points on it. It is constructed as follows:
\begin{enumerate}
\item label the intersection points of $C$ by different labels;
\item mark on the skeleton all the preimages of the intersection points;
\item label each marked point by the label of the corresponding intersection point.
\end{enumerate}
We call the set of all preimages of an intersection point with label $L$ the \emph{singular set with label $L$}. We do not distinguish rough diagrams obtained from each other by orientation-preserving diffeomorphisms of the skeleton which send each connected component to itself.

Here is an example of a cactus doodle and a rough diagram of it:
$$\includegraphics[width=300pt]{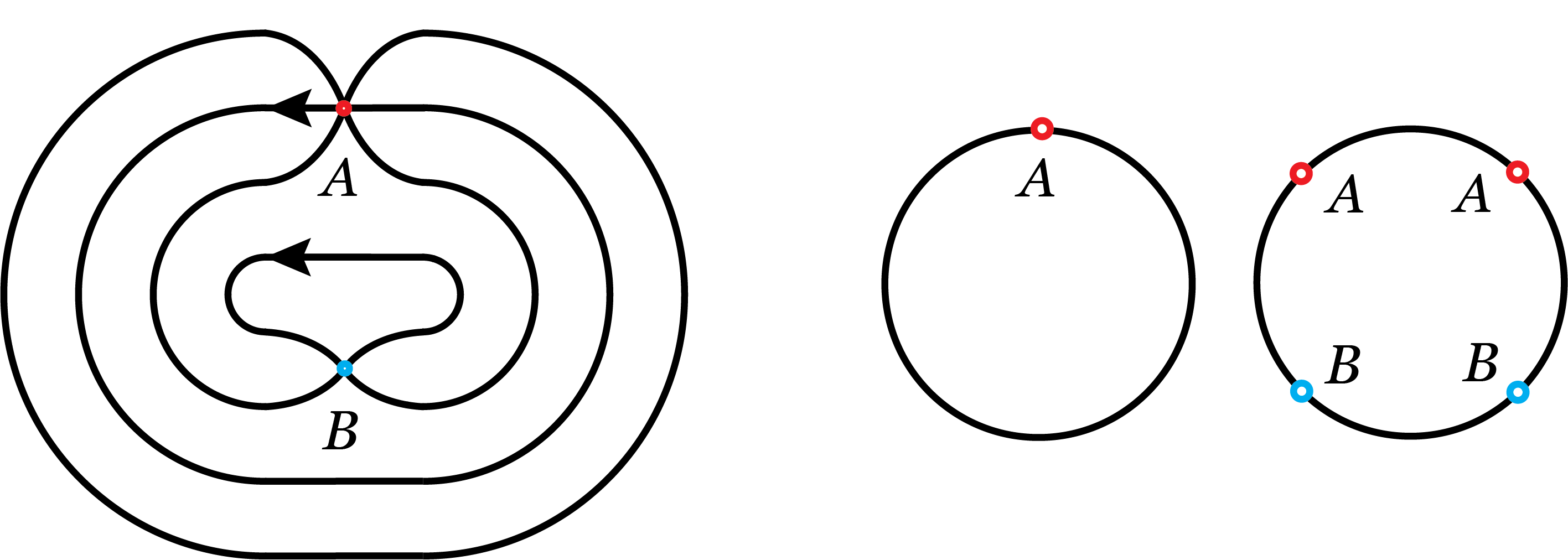}$$
The components of the skeleton are assumed to be oriented counterclockwise (although in this particular example it is irrelevant).
 
The elementary moves on cactus doodles induce moves on their rough diagrams; these are shown in Figure~\ref{rdmoves}.
\begin{figure}[h]
$$\includegraphics[width=350pt]{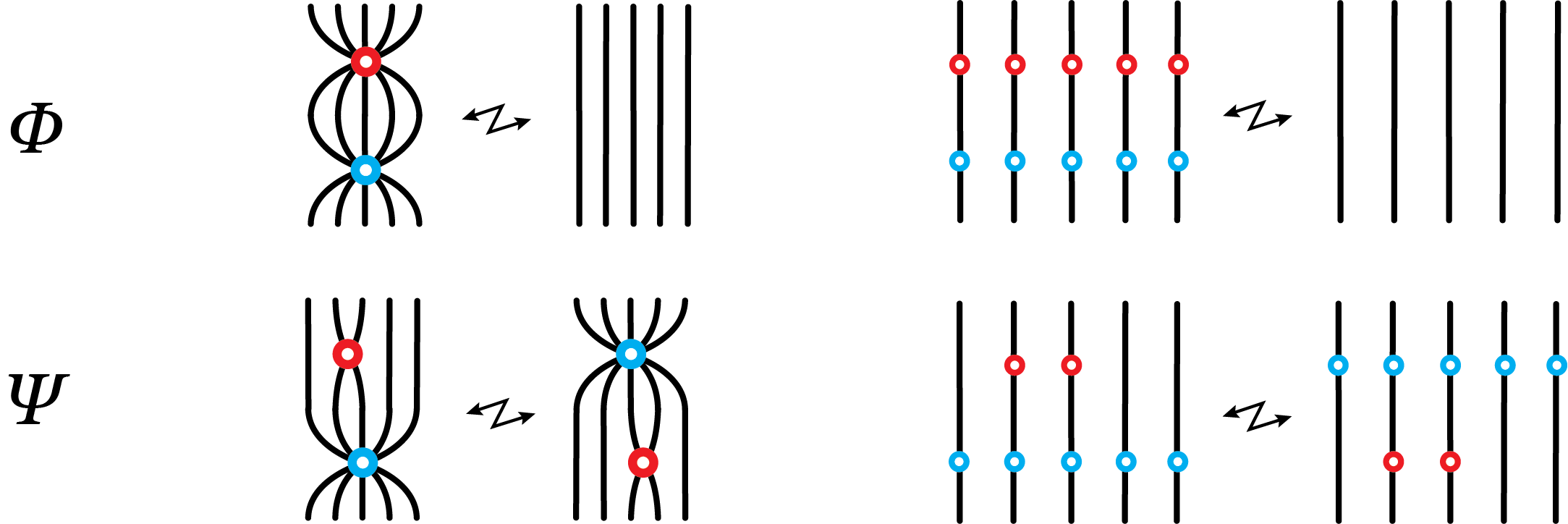}$$
\caption{Elementary moves on rough diagrams}\label{rdmoves}
\end{figure}

We will refer to all marked configuration of points on a union of circles as rough diagrams, whether or not they are \emph{realizable}, that is, whether or not they come from a cactus doodle. The above moves make sense for all, not necessarily realizable, rough diagrams.

A rough diagram of a cactus doodle does not carry sufficient information for reconstructing the cactus doodle up to isotopy. The following cactus doodles are different but their rough diagrams are the same:
\medskip

$$\includegraphics[width=250pt]{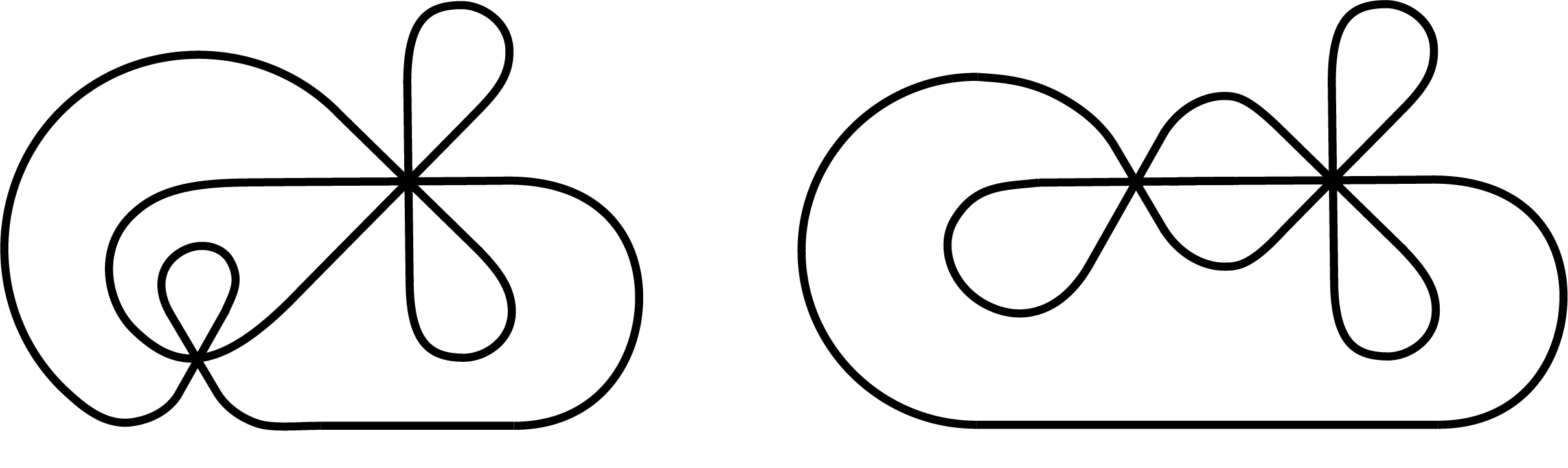}$$

\medskip

In order to be able to reconstruct a cactus doodle from a rough diagram, one should add the information about the orientations and the cyclic order of the branches to each singular set. Define an \emph{oriented cyclic order} on a set $X$ as a cyclic order on $X$ together with its lifting to a cyclic order on the set $X\times\{\pm 1\}$. In other words, it is a cyclic order on $X\times\{\pm 1\}$ such that if $(x_2, \varepsilon_2)$ follows $(x_1, \varepsilon_1)$, then  $(x_2, -\varepsilon_2)$ follows $(x_1, -\varepsilon_1)$ for $x_1,x_2\in X$ and $\varepsilon_1,\varepsilon_2\in\{\pm 1\}$. If $A\subset X$ is any subset, an oriented cyclic order on $X$ induces an oriented cyclic order on $A$. The \emph{reversal} of an oriented cyclic order on $A\subseteq X$ is the oriented cyclic order on $X$ obtained by inverting the order on each maximal subset of consecutive elements of $A\times\{\pm 1\}$.

The set $X$ of branches of a cactus doodle at any given intersection point has a natural oriented cyclic order on it. Let $(x,-1)$ denote the initial and $(x,1)$ the final point of a branch $x\in X$. Then, listing the endpoints of the  branches in the counterclockwise order, one obtains an oriented cyclic order on $X$:

\begin{figure}[h]
$$\includegraphics[width=250pt]{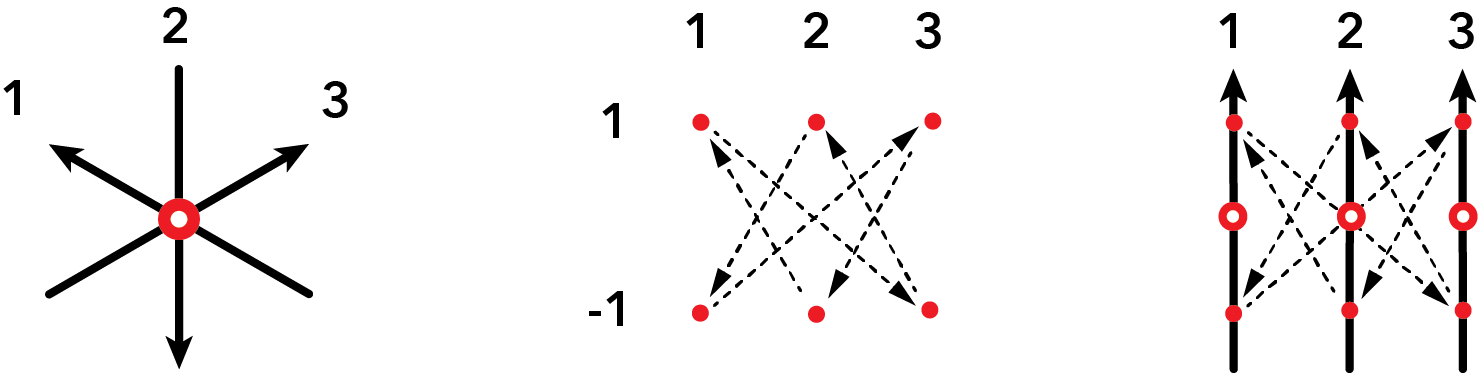}$$
\caption{An intersection point of a cactus doodle (left), the oriented cyclic order on the set of branches at the intersection point (middle), the corresponding singular set on its Gauss diagram (right).}
\end{figure}

We define a \emph{Gauss diagram}  as a rough diagram together with an oriented cyclic order on each singular set. For a 
geometric cactus doodle ${C}$, the Gauss diagram of ${C}$ is the rough diagram of ${C}$ together with the oriented cyclic order of the branches of ${C}$ on each singular set. The elementary moves for the Gauss diagrams are defined as in Figure~\ref{rdmoves} with the following additional conditions: in each diagram, the oriented cyclic order on the singular set with the red label 
is the opposite of the oriented cyclic order induced by that on the singular set with the blue label;  the move $\Psi$ reverses the oriented cyclic order on the red singular set and on the corresponding points of the blue singular set:

$$\includegraphics[width=300pt]{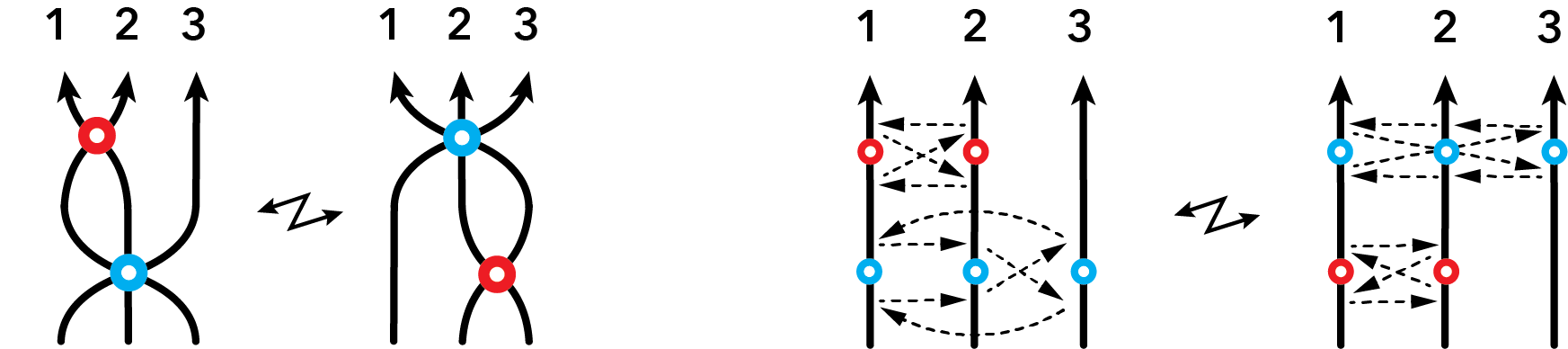}$$

\medskip

A Gauss diagram does not necessarily come from a cactus doodle, but when it does, it often characterizes the cactus doodle completely.

Let us call a geometric cactus doodle \emph{connected} if its image is connected as a subset of $S^2$.
\begin{thm}\label{GD} 
Two connected geometric cactus doodles with the same Gauss diagram are isotopic in $S^2$.
\end{thm}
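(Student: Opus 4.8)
The plan is to reconstruct the curve from its Gauss diagram in two stages: first to recover the image of the doodle as a graph cellularly embedded in $S^2$, determined up to orientation-preserving homeomorphism, and then to promote this to the desired statement about the parametrized immersed curves. Given a connected geometric cactus doodle $C$, regard its image as a graph $G\subset S^2$ whose vertices are the intersection points of $C$ and whose edges are the arcs of $C$ joining consecutive intersection points; a vertex arising from a $k$-tuple point has valence $2k$. Since the only multiple points of $C$ are its transverse intersections, the arcs of $C$ between marked points are pairwise disjoint embedded arcs, so the edges of $G$ correspond bijectively to the arcs of the rough diagram and the vertices to the labels. The one extra datum needed is the cyclic order of the edge-ends around each vertex, i.e.\ the rotation system; I claim this is exactly the underlying cyclic order of the oriented cyclic order on the corresponding singular set. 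Indeed $(x,-1)$ and $(x,+1)$ are the incoming and outgoing edge-ends of the branch $x$, and listing them counterclockwise with respect to the orientation of $S^2$ produces the rotation at the vertex. Thus the Gauss diagram determines the combinatorial map $(G,\text{rotation})$, and two doodles with the same Gauss diagram determine the same combinatorial map.

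The second step invokes the classical fact that a connected combinatorial map admits a cellular embedding into a closed orientable surface, unique up to orientation-preserving homeomorphism (its genus being forced by Euler's formula once the faces are counted by face-tracing). This is exactly where connectedness is used: since the image of $C$ is connected, $G$ is connected, so every component of $S^2\setminus G$ is an open disk and the embedding $G\subset S^2$ is cellular. Hence the two doodles realize one and the same combinatorial map by cellular embeddings into $S^2$, and the uniqueness statement yields an orientation-preserving homeomorphism $h\colon S^2\to S^2$ taking the image of the first onto the image of the second and respecting the combinatorial structure. Because every orientation-preserving homeomorphism of $S^2$ is isotopic to the identity, $h$ is realized by an ambient isotopy, so the two images are isotopic in $S^2$.

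It remains to upgrade the isotopy of images to an isotopy of the parametrized cactus doodles. After the ambient isotopy we may assume the two images coincide as graphs in $S^2$, and we must check that the two immersions trace this graph in the same way. Here the remaining content of the Gauss diagram is used: the circles $M$ together with the cyclic positions of the marked points prescribe, for each component, the closed edge-walk it follows in $G$, while the pairing $(x,-1)\leftrightarrow(x,+1)$ extracted from each oriented cyclic order prescribes, at every vertex, which incoming edge-end is joined smoothly to which outgoing one. Since the local model at a $k$-tuple point, namely $k$ arcs through a point with distinct tangent lines arranged in the prescribed circular order, is determined by this data up to local diffeomorphism, the two immersions differ by a reparametrization, which can be absorbed into a further isotopy.

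I expect the main obstacle to lie in this last passage from the image graph to the immersed curve: one must verify that the oriented-cyclic-order data reconstructs the local smooth picture at each $k$-tuple point unambiguously (in particular that the antipodal symmetry condition built into an oriented cyclic order is precisely what forces the branches to have distinct tangent lines), and that the reparametrizations can be chosen compatibly across all vertices so as to assemble into a genuine ambient isotopy rather than a mere homeomorphism of images. The graph-theoretic half is routine, since the uniqueness of the cellular embedding is classical and connectedness makes cellularity automatic; the immersion-theoretic bookkeeping is where care is required.
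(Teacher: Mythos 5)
Your proposal is correct and follows essentially the same route as the paper: both arguments reconstruct $S^2$ together with the embedded curve from the combinatorics of the Gauss diagram, with connectedness used in the same way (the complementary regions are discs, i.e.\ the embedding is cellular). The difference is only packaging --- the paper directly describes the complementary discs and their gluing along the doodle, while you encode the same data as a rotation system (which is exactly what the oriented cyclic orders give) and invoke the classical uniqueness of cellular embeddings, additionally spelling out the upgrade from images to parametrized immersions that the paper's terse proof leaves implicit.
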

\begin{proof}
Indeed, the complement to a connected cactus doodle in $S^2$ consists of a finite number of regions homeomorphic to discs. The boundary of each region consists of a finite number of intervals whose endpoints are the intersection points. The Gauss diagram contains the complete information on the order and orientation of the intervals in the boundary of each disc and on the way the discs are glued with each other along the doodle. 
\end{proof}

The following (obvious) statement holds for all, not necessarily connected, cactus doodles:

\begin{lem}\label{dia} Let $C$ be a geometric cactus doodle with the Gauss diagram $D$. If the Gauss diagram $D'$ is realizable and is obtained from $D$ by an elementary move, then there exists a unique cactus doodle $C'$  obtained from $C$ by an elementary move and whose Gauss diagram is  $D'$.
\end{lem}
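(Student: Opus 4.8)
The plan is to use that every elementary move is \emph{local}. On a cactus doodle it is supported in a disc $B\subset S^2$, and correspondingly on a Gauss diagram it alters only the marked points of one or two singular sets together with their oriented cyclic orders, as recorded in Figure~\ref{rdmoves}; the rest of the diagram is left unchanged. So a Gauss diagram move $D\to D'$ comes with a distinguished finite collection of marked points and connecting arcs of $M$, and the first step is to locate the corresponding data on $C$: the modified singular sets are the preimages of one intersection point (for $\Psi$) or two intersection points (for $\Phi$), and the arcs of $M$ joining the affected marked points are, by inspection of $D$, arcs of $C$ carrying no intersection points of $C$ in their interiors.

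Second, I would establish existence. The oriented cyclic orders carried by the affected singular sets fix the directions and the cyclic arrangement of the branches at the intersection point(s), so near these points $C$ agrees with the left-hand side of Figure~\ref{phi} or of Figures~\ref{psi1}--\ref{psi2}. Because the connecting arcs carry no intersection points, no branch of $C$ may cross them; hence these arcs cut $S^2$ into regions, each of which is either empty of $C$ or a union of whole components of $C$. Assuming the region that the move must collapse (for $\Phi$) or sweep a branch across (for $\Psi$) is empty, one enlarges a neighbourhood of it to a disc $B$ containing exactly the relevant before-picture, and replaces that by the after-picture. This produces a cactus doodle $C'$ which differs from $C$ by a single elementary move and which, by construction, implements on $C$ precisely the change that $D\to D'$ performs on the diagram; thus the Gauss diagram of $C'$ is $D'$.

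Third, uniqueness. The doodle $C'$ coincides with $C$ outside $B$ and equals the standard after-picture inside $B$, and the latter is determined up to isotopy by the type of move and by the matching of branches on $\partial B$; hence $C'$ is well defined up to isotopy of $S^2$, i.e.\ unique as a cactus doodle. On the connected pieces this may also be read off directly from Theorem~\ref{GD}.

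The step I expect to be the main obstacle is precisely the emptiness assumption used in the existence argument --- the passage from the \emph{combinatorial} legality of $D\to D'$ to its \emph{geometric} performability on the given $C$. The danger, which only arises once $C$ is disconnected, is that a component of $C$ lies trapped in the very region the move must act on, so that no single elementary move of $C$ can realize $D\to D'$. For connected $C$ no such trapping occurs: the branch configuration recorded by the oriented cyclic orders forces the collapsing region to be bounded only by the connecting arcs and reachable from the rest of $C$ only across them, which is the content underlying Theorem~\ref{GD}. In general one wants the connecting arcs (carrying no intersection points, so that nothing crosses into the region) together with the hypothesis that $D'$ is realizable to force the relevant region to be empty; I expect realizability of $D'$ to be exactly what rules out the trapped configuration, and making this implication rigorous is the one point that genuinely requires care.
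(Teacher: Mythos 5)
The paper offers no argument for this lemma at all --- it is declared obvious --- so the comparison here is between your proposal and what the authors take for granted. Your locality analysis and your uniqueness argument (the after-picture inside the disc $B$ is determined up to isotopy by the type of move and the matching of branches on $\partial B$) are correct, and are surely what the authors have in mind. The genuine problem is the step you yourself single out and defer: the claim that realizability of $D'$ rules out a component of $C$ being trapped in the region the move must act on. That claim is false. Realizability is a property of the abstract Gauss diagram, and for disconnected doodles the Gauss diagram retains no information about how the components are nested in $S^2$ --- this is exactly why Theorem~\ref{GD} is stated only for connected doodles, and why the paper exhibits two distinct cactus doodles with the same rough diagram. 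Concretely, let $C$ be the disjoint union of a figure-eight $F$, with loops $L_1, L_2$ meeting at one double point, and a small circle $K$ placed in the open disc bounded by $L_1$. Let $D\to D'$ be the diagram move $\Phi_2$ that inserts a pair of double points between the circle of $K$ and the arc of $F$ running along $L_2$. Then $D'$ is realizable --- realize it by a copy of $F$ with $K$ crossing $L_2$ twice --- but no elementary move applied to this particular $C$ has Gauss diagram $D'$: a geometric $\Phi_2$ would require a disc meeting $C$ in exactly one arc of $K$ and one arc of $L_2$, hence a path from $K$ to $L_2$ whose interior avoids $C$, whereas every such path must cross $L_1$. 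The same trapping phenomenon obstructs $\Phi$-annihilations and $\Psi$-moves whenever a component of $C$ sits inside the bigon being collapsed or the region being swept.

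Consequently your proposal cannot be completed as written, and in fact the lemma in the stated generality (``not necessarily connected'') fails; the gap you flagged is not a technical point awaiting care but an actual counterexample. What your approach does prove is the lemma for connected $C$, and there your appeal to Theorem~\ref{GD} can be organized more efficiently than by analyzing regions directly: given a realization $E$ of $D'$, the inverse move $D'\to D$ is a $\Phi$-annihilation or a $\Psi$-move; on a connected doodle such a move is always geometrically performable, since no strand can enter the region to be collapsed without creating a marked point on the connecting arcs (which the diagram forbids) and there are no other components to be trapped there; performing it on $E$ yields a doodle with diagram $D$, which is isotopic to $C$ by Theorem~\ref{GD}, and transporting the move along this isotopy produces the desired $C'$, whose uniqueness then follows from your disc-replacement argument. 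For disconnected doodles --- which is the case actually needed when the lemma is invoked for Proposition~\ref{xz} --- the existence half requires data beyond the Gauss diagram, namely some record of how components are distributed among the complementary regions, and neither your argument nor the paper supplies it.
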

In other words, elementary moves between realizable Gauss diagrams are themselves uniquely realizable. Another useful statement whose proof is obvious is
\begin{lem}\label{rea} If the Gauss diagram $D'$ is obtained from a realizable  diagram $D$ by an elementary move of type $\Psi$, then $D'$ is itself realizable. The same is true if $D'$ is obtained from  $D$ by an elementary move of type $\Phi$ that eliminates a pair of singular sets.
\end{lem}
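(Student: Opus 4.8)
The plan is to realize the abstract move on Gauss diagrams as an honest geometric move carried out on a realization $C$ of $D$, and then to let $C'$ be the resulting geometric cactus doodle; its Gauss diagram will be $D'$ by construction, so $D'$ is realizable. The whole point is that for the two kinds of move allowed in the statement, the local geometric configuration needed to perform the move is already present in \emph{every} realization of $D$, and this is exactly what fails for a $\Phi$-move that creates a pair.

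First I would translate the applicability of each move into geometry. An elementary move on a Gauss diagram is supported on a prescribed local pattern of marked points on $M$ (Figure~\ref{rdmoves}). An arc of $M$ that carries no marked points between two consecutive marks is the preimage of an embedded arc of $C$ running from one intersection point to another without meeting any further crossing of $C$ along the way. Thus the ``before'' pattern of a $\Psi_{k,n}$-move reads off, inside $C$, a $k$-tuple point and an $n$-tuple point joined by the crossing-free branches shown in Figures~\ref{psi1} and~\ref{psi2}, while the ``before'' pattern of an eliminating $\Phi_n$-move reads off two $n$-tuple points joined by crossing-free branches in the bigon configuration of Figure~\ref{phi}. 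Since $C$ realizes $D$, precisely this local picture occurs in $C$.

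Next I would perform the matching geometric move inside that local region: for $\Psi$, slide the $k$-tuple point through the $n$-tuple point; for the eliminating $\Phi$, pull the two $n$-tuple points apart so as to annihilate them (Figure~\ref{phi}, read from right to left). Each is a local operation supported in a disc whose interior meets $C$ only in the displayed branches, so each yields a bona fide geometric cactus doodle $C'$. By the definition of how elementary moves act on Gauss diagrams, the Gauss diagram of $C'$ is $D'$, whence $D'$ is realizable.

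I expect the only delicate point to be the verification that the configuration dictated by the diagrammatic pattern really does sit inside a disc meeting $C$ in nothing but the indicated branches --- equivalently, that the crossing-free arcs read off from $M$ bound the expected local picture and enclose no stray part of $C$. This is where the hypothesis does its work: the ``before'' picture of a $\Psi$-move, or of an eliminating $\Phi$-move, is completely determined by $D$, so its presence in $C$ is forced by realizability. By contrast, the ``before'' picture of a creating $\Phi$-move consists of $n$ parallel crossing-free strands which the diagram does not constrain to lie in a common disc in $C$; pushing the specified branches together may be impossible without introducing further crossings, and this is precisely why that case is excluded.
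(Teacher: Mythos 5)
Your overall strategy --- realizing the diagrammatic move by an honest geometric move on a realization $C$ of $D$ --- is exactly the intended argument (the paper offers no written proof of Lemma~\ref{rea}, declaring it obvious), and your explanation of why the \emph{creating} $\Phi$-move must be excluded is correct. However, the step you yourself flag as delicate contains a genuine error: it is \emph{not} true that the clean local picture ``is already present in \emph{every} realization of $D$''. Cactus doodles are parametrized by a union of circles $M=S^1\sqcup\ldots\sqcup S^1$, so a realization may be disconnected, and an entire connected component of the image of $C$ can be trapped inside one of the regions bounded by the crossing-free arcs of the pattern. Concretely, let $\alpha$ be a curve with two double points $X$, $Y$ joined by two crossing-free arcs $a$, $b$ forming a bigon, with further crossings elsewhere on $\alpha$ (so that $a$ and $b$ are the arcs singled out by the diagram), and let $\gamma$ be a small circle embedded in the open lens bounded by $a\cup b$. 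Any disc $\Delta$ in which the eliminating $\Phi_2$-move could be performed must contain the circle $a\cup b\cup\{X,Y\}$, hence must contain one of the two discs this circle bounds in $S^2$; one of those discs contains $\gamma$ and the other contains the rest of $\alpha$, so $C\cap\Delta$ is never just the two displayed branches and the move cannot be performed on this realization at all. (For \emph{connected} $C$ your claim is correct: anything inside a region bounded by the pattern arcs would be a nonempty proper clopen subset of $C$, which is impossible.)

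The gap is easily repaired, precisely because realizability of $D'$ requires only \emph{some} realization, not a move on the given one. Whatever is trapped inside the regions swept by the move is a union of whole components of the image of $C$, disjoint from the rest of $C$; deleting these components and re-embedding them in another complementary region produces a geometric cactus doodle with the \emph{same} Gauss diagram $D$, since the Gauss diagram records only the intersection data along $M$ and not which region of the complement each component occupies. After this modification the local disc is clean, your local surgery applies verbatim, and the resulting $C'$ realizes $D'$. With this extra step your proof is complete; note that the same subtlety is present in Lemma~\ref{dia}, whose statement quantifies over the given realization $C$, so it is worth isolating explicitly.
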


Note that an elementary move of type $\Phi$ that creates a pair of singular sets in a realizable diagram may result in a non-realizable diagram.

Finally, let us note that that the operation of mirror reflection 
of a cactus diagram can be defined on the level of Gauss diagrams. The mirror image $D^*$ of a Gauss diagram $D$ is obtained from $D$ by taking a mirror image of the rough diagram of $D$ and endowing each singular set with a cyclic order opposite to that in $D$.

\section{Proof of Theorem~\ref{min}}
Assume that there exist two equivalent minimal geometric cactus doodles which cannot be transformed into each other by elementary moves or mirror reflections without increasing the number of intersection points. Then, one can find at least one sequence  of geometric cactus doodles $C_0,\ldots, C_r$ with $r>1$ such that:
\begin{enumerate}
\item $C_i$ is obtained from $C_{i-1}$ by an elementary move of type $\Psi$ or a mirror reflection for each $i$ with $1<i<r$;
\item $C_1$ is obtained from $C_0$ by inserting a pair of intersection points by means of a move of type $\Phi$;
\item $C_r$ is obtained from $C_{r-1}$ by eliminating a pair of intersection points by means of a move of type $\Phi$;
\item any sequence of elementary moves and mirror reflections connecting $C_0$ to $C_r$ must include a geometric cactus doodle which has more intersection points than $C_0$ and $C_r$.
\end{enumerate} 
Indeed, if no sequence of geometric cactus doodles satisfying (1-3) satisfies (4), any pair of equivalent geometric cactus doodles with the same number of intersection points can be transformed into each other without increasing the number of these points. 

Theorem~\ref{min} will be proved as soon as we establish
\begin{prop}\label{xz}
The conditions (1-3) are in contradiction with the condition (4) above.
\end{prop}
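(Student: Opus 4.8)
The plan is to argue entirely with Gauss diagrams and, out of the hypothetical sequence $C_0,\ldots,C_r$, to manufacture a new path of cactus doodles joining $C_0$ to $C_r$ whose number of intersection points never exceeds $N:=\#C_0=\#C_r$; this contradicts condition (4). First I would replace each $C_i$ by its Gauss diagram $D_i$. By (1--3) the sequence $D_0\to\cdots\to D_r$ consists of a single move of type $\Phi$ that creates a pair $P=\{S,S'\}$ of (equal-multiplicity) singular sets, namely $D_0\to D_1$; a run of moves of type $\Psi$, the steps $D_i\to D_{i+1}$ for $1\le i\le r-2$; and a single move of type $\Phi$ that deletes a cancellable pair $Q$, the step $D_{r-1}\to D_r$. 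Throughout I will use that moves of type $\Psi$ and pair-deleting moves of type $\Phi$ preserve realizability (Lemma~\ref{rea}), and that any elementary move between two realizable diagrams lifts uniquely to an elementary move between cactus doodles (Lemma~\ref{dia}); so it suffices to build the desired low-complexity path at the level of diagrams.

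The next step is to track and then erase the created pair $P$. No single move of type $\Psi$ can involve both $S$ and $S'$, since such a move passes a $k$-tuple point through an $n$-tuple point with $k<n$, while $S$ and $S'$ have equal multiplicity; hence $P$ survives the whole run of $\Psi$-moves and I may form $\tilde D_i:=D_i\setminus P$ for $1\le i\le r-1$. A $\Psi$-move $D_i\to D_{i+1}$ not involving $P$ descends to a $\Psi$-move $\tilde D_i\to\tilde D_{i+1}$, whereas one that passes a point through $S$ or $S'$ becomes the identity once $P$ is erased. Since $\tilde D_1=D_0$ is realizable, Lemma~\ref{rea} makes all $\tilde D_i$ realizable, and Lemma~\ref{dia} lifts the chain to a sequence of cactus doodles joining $C_0$ to a doodle $\tilde C_{r-1}$ by moves of type $\Psi$, all at complexity $N$.

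It then remains to join $\tilde D_{r-1}=D_{r-1}\setminus P$ to $D_r=D_{r-1}\setminus Q$ without exceeding $N$. If $P=Q$ then $\tilde D_{r-1}=D_r$ and we are done. If $P\cap Q=\emptyset$, then since $Q$ is cancellable in $D_{r-1}$ the region between its singular sets is empty, so $Q$ remains cancellable in $\tilde D_{r-1}$; deleting it reaches $D_{r-1}\setminus(P\cup Q)$ at complexity $N-2$. On the other side I would first clear $P$ inside $D_r$: a $\Psi$-move of the run may have pushed lower-multiplicity points into the region between $S$ and $S'$, but each such intruder can be pushed \emph{entirely} through $P$ by further $\Psi$-moves, and one checks that pushing an intruder first through $S$ and then through $S'$ restores $P$ to a cancellable configuration. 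This joins $D_r$ to a diagram $D_r'$ in which $P$ is cancellable, and cancelling $P$ reaches $D_r'\setminus P$. As every clearing move involves a point of $P$, it is trivial after $P$ is erased, whence $D_r'\setminus P=D_r\setminus P=D_{r-1}\setminus(P\cup Q)$. Thus both descents meet at the same realizable diagram at complexity $N-2$, and concatenating (the $\Psi$-path from $C_0$ to $\tilde C_{r-1}$, the cancellation of $Q$, the creation of $P$, and the reversed clearing path to $C_r$) yields a sequence from $C_0$ to $C_r$ whose number of intersection points only takes the values $N$ and $N-2$, never exceeding $N$. This contradicts (4). The remaining possibility, that $P$ and $Q$ share exactly one singular set, forces three parallel equal-multiplicity crossings in a row and is dispatched by the same cancellation bookkeeping.

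The crux, and the step I expect to cause the most trouble, is the intruder-clearing in the third paragraph together with the check that it preserves cancellability of $P$ once the oriented cyclic orders carried by the singular sets are taken into account. I would prove it by a local analysis modelled on the relation $C_3$ of the cactus group: passing an intruder through $S$ reverses the relevant sub-order, and passing it afterwards through $S'$ reverses it back, so that $S$ and $S'$ stay a genuine cancellable pair with matching reversed oriented cyclic orders, as required by the conventions of Figure~\ref{rdmoves}. Making this uniform over all multiplicities, and verifying that the two descents to complexity $N-2$ really land on the identical diagram, is where the real work lies; everything else is bookkeeping with Lemmas~\ref{dia} and~\ref{rea}.
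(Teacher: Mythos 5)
The central gap is in your second paragraph: the claim that a $\Psi$-move which passes a singular set $Q$ through $S$ or $S'$ ``becomes the identity once $P$ is erased.'' That is true at the level of \emph{rough} diagrams, but false at the level of Gauss diagrams, which is the level your argument (and Lemma~\ref{dia}) requires. By the conventions accompanying Figure~\ref{rdmoves}, a move of type $\Psi$ reverses the oriented cyclic order on the singular set being passed (and on the corresponding points of the set it passes through). Hence if $D_i\to D_{i+1}$ pushes $Q$ into the region between $S$ and $S'$, then in $\tilde D_i$ and $\tilde D_{i+1}$ the points of $Q$ occupy the same positions but carry \emph{opposite} oriented cyclic orders; these are different Gauss diagrams and are related by no elementary move, so your chain $\tilde D_1,\ldots,\tilde D_{r-1}$ is broken. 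The same defect undermines the identification $D_r'\setminus P=D_r\setminus P$, which also rests on ``clearing moves are trivial after erasure.'' Your own remark in the last paragraph (passing an intruder through $S$ and then through $S'$ reverses its order twice) actually pinpoints the problem: a point that has passed through $S$ an \emph{odd} number of times and still sits between $S$ and $S'$ at stage $i$ carries a dangling reversal that mere deletion of $P$ does not cancel.

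This is precisely the difficulty that the paper's proof (via Proposition~\ref{xz2}) is built to resolve: instead of $D_i\setminus P$, one defines $D_i'$ by erasing the created pair \emph{and} reversing the oriented cyclic order on the subset of every other singular set whose points lie on the segments joining the two created sets, i.e.\ on the points currently trapped between them. With this correction the through-passing moves really do become identities, and the remaining moves descend to genuine $\Psi$-moves, which is what legitimizes the sequence $D_0=D_1',D_2',\ldots$. Your endgame for $P\cap Q=\emptyset$ (cancel $Q$ on one side, clear and cancel $P$ on the other, meet at complexity $N-2$, with realizability obtained by running the clearing path from the realizable end $D_r$ and invoking Lemmas~\ref{rea} and~\ref{dia}) is essentially the paper's treatment of the case $\{A,A'\}\cap\{B,B'\}=\emptyset$; but both it and the case $|P\cap Q|=1$, which you dismiss in one sentence and which the paper handles with a separate three-family construction, inherit the same missing correction. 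So the architecture of your argument is sound and close to the paper's, but without the cyclic-order correction the key descents are not sequences of elementary moves, and the proof as written fails.
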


In view of Lemma~\ref{dia}, Proposition~\ref{xz} is a corollary of the corresponding statement for Gauss diagrams:
\begin{prop}\label{xz2}
Let $D_0,\ldots, D_r$ be a sequence of realizable Gauss diagrams such that:
\begin{enumerate}
\item $D_i$ is obtained from $D_{i-1}$ by an elementary move of type $\Psi$ for each $i$ with $1<i<r$;
\item $D_1$ is obtained from $D_0$ by inserting a pair of singular sets by means of the move of type $\Phi$;
\item $D_r$ is obtained from $D_{r-1}$ by eliminating a pair of singular sets by means of the move of type $\Phi$;
\end{enumerate} 
Then, there exists a sequence of elementary moves connecting $D_0$ either to $D_r$ or its mirror image $D_r^*$ in which every diagram is realizable and has the same or smaller number of singular sets as $D_0$ and $D_r$.
\end{prop}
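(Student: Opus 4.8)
The plan is to argue by induction on $r$, reducing every such ``valley'' (create a pair, slide by moves of type $\Psi$, eliminate a pair) to a sequence that never rises above the common size of its endpoints. Write $N$ for the number of singular sets of $D_0$, which equals that of $D_r$; then $D_1,\dots,D_{r-1}$ all have $N+2$ singular sets, and the number of intervening $\Psi$ moves is $r-2$. The first step is to note that the pair of singular sets inserted at step $1$ — call them $\alpha$ and $\beta$ — can be tracked unambiguously through the whole sequence, since a move of type $\Psi$ neither creates nor destroys a singular set and preserves the multiplicity of each one. In particular $\alpha$ and $\beta$ keep the common multiplicity $n$ with which they were born, and since a move $\Psi_{k,n'}$ relates two singular sets of \emph{distinct} multiplicities $k<n'$, no single move of type $\Psi$ in the sequence can involve both $\alpha$ and $\beta$ at once. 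This is the combinatorial fact that makes the tracking manageable.

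The engine of the induction is a commutation principle: two consecutive elementary moves that involve disjoint collections of singular sets may be performed in either order, the intermediate diagram being realizable by Lemma~\ref{rea} and of no larger size. Examining the first move of type $\Psi$, namely $D_1\to D_2$, there are two cases. If it is disjoint from $\{\alpha,\beta\}$, I would commute it past the creation: it becomes a move of type $\Psi$ applied directly to $D_0$, producing a realizable diagram $D_0'$ of the same size $N$, after which $D_0'\to D_2\to\cdots\to D_r$ is a valley with one fewer $\Psi$ move and the induction hypothesis applies; prepending the single move $D_0\to D_0'$ of size $N$ finishes this case. The same reasoning applied at the far end (commuting the last move of type $\Psi$ with the eliminating move of type $\Phi$) lets me assume that the first and last $\Psi$ moves both genuinely interact with the pair they border.

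The remaining, and main, difficulty is this interacting case, where the first move of type $\Psi$ passes a third singular set $c$ through $\alpha$ (say) while $\beta$ sits beside $\alpha$ in the canceling configuration of Figure~\ref{phi}. Here the outcome depends on whether the passage of $c$ drives it into the region bounded by the canceling pair or merely past $\alpha$ on the outside. In the outside case the canceling structure of $\{\alpha,\beta\}$ survives the move, so $D_2$ is again $D_0$ with a canceling pair inserted, only in a translated position; this is realized by a single move of type $\Phi$ out of $D_0$, and the valley shortens. In the inside case the canceling pair is temporarily obstructed by $c$, and I would instead locate the matching later move in which $c$ leaves the region by crossing $\beta$; commuting the two passages together, their composite followed by the eventual elimination of $\{\alpha,\beta\}$ is trivial at the level of $D_0$, so the pair can be cancelled early and the valley again shortens. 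Establishing that one of these two reductions always applies to each interacting $\Psi$ move is the technical heart: it requires going through the finitely many local pictures of Figures~\ref{psi1} and~\ref{psi2} against Figure~\ref{phi} and, crucially, checking in each that the oriented cyclic orders line up so that every intermediate Gauss diagram remains realizable. This orientation bookkeeping, forced by Lemmas~\ref{dia} and~\ref{rea}, is where I expect the real obstacle to lie, since the matching argument in the inside case must be made uniformly and compatibly with realizability.

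Finally I would settle the base case $r=2$, in which there are no moves of type $\Psi$ and the sequence is a creation followed by an elimination. If the eliminated pair is the tracked pair $\{\alpha,\beta\}$, then $D_0$ and $D_2$ coincide and there is nothing to prove. Otherwise the eliminated pair, being cancellable even in the presence of the freshly inserted $\{\alpha,\beta\}$, is already a canceling pair in $D_0$; eliminating it first drops the count to $N-2$ (realizable by Lemma~\ref{rea}), and recreating $\{\alpha,\beta\}$ returns to size $N$ at the realizable diagram $D_2$, producing a sequence that stays at most $N$. Combining the base case with the two inductive reductions yields a sequence from $D_0$ to $D_r$ that is realizable throughout and never exceeds the common size of $D_0$ and $D_r$, which is exactly the claim.
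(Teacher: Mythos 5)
Your proposal is a plan rather than a proof, and its unproven core is not a routine verification but the actual content of the proposition. You concede this yourself (the interacting case is ``the technical heart'' and ``where I expect the real obstacle to lie''), but the difficulty is worse than deferred bookkeeping: the inside-case reduction rests on the assumption that the obstructing singular set $c$, once it has entered the region bounded by the created pair $\{\alpha,\beta\}$, must later leave it by crossing $\beta$, so that the two passages can be paired off and the pair cancelled early. Nothing in hypotheses (1)--(3) guarantees this. The pair $\{B,B'\}$ eliminated by the final $\Phi$ move need not be the created pair: it can be disjoint from $\{\alpha,\beta\}$, or --- the case your analysis never contemplates --- share exactly one label with it. In either situation $c$ may remain ``inside'' for the rest of the sequence and your matching argument has nothing to match. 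The same overlooked overlap case breaks your base case $r=2$: if the eliminated pair is $\{\alpha,\gamma\}$ with $\gamma\neq\beta$, it is neither the tracked pair nor ``already a canceling pair in $D_0$,'' since $\alpha$ does not exist in $D_0$, so your dichotomy fails. Even your outside case is doubtful as stated: a move of type $\Psi$ reverses part of the oriented cyclic order on $\alpha$ and inserts (or removes) points of $c$ between $\alpha$ and $\beta$, so the claim that the canceling configuration ``survives, only translated'' needs exactly the cyclic-order analysis you postpone. The only part of the proposal that is solid is the commutation of a $\Psi$ move disjoint from the created (or annihilated) pair, which is correct via Lemma~\ref{rea}.

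For contrast, the paper avoids tracking and commuting altogether. It splits according to the overlap $\{A,A'\}\cap\{B,B'\}$ (two, one, or zero common labels) and, for all intermediate diagrams $D_i$ simultaneously, builds a projected diagram by erasing the created and/or annihilated pairs and reversing the oriented cyclic orders on the portions of the other singular sets lying on the segments joining them. The projected sequence is then automatically a chain of $\Psi$ moves (together with renamings in the one-label case, or two pair-eliminating $\Phi$ moves in the zero-label case), stays realizable throughout by Lemma~\ref{rea}, and never exceeds the size of $D_0$. The one-label-overlap case that your induction misses is precisely one of the three cases this projection handles uniformly; and proving your inside/outside dichotomy correctly would amount to redoing this projection one step at a time. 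If you want to salvage the inductive scheme, you must (i) add the mixed-overlap case everywhere it can occur, and (ii) replace the matching heuristic by an argument that does not presuppose the final elimination involves the tracked pair.
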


\begin{proof} 

Let  $A$ and $A'$ be the labels of the singular sets that are created by the move that transforms $D_0$ to $D_1$ and denote by  $B$ and $B'$ the  labels of the singular sets that are annihilated by the move that transforms $D_{r-1}$ into $D_r$.

Let us first consider the case $\{A,A'\}=\{B,B'\}$ and set $|A|=|A'|=n$. Take $n$ disjoint segments $I_1,\ldots I_n$ on the skeleton of $D_1$ which connect the points of $A$ with the points of $A'$ and contain no point of any other singular set. We can think that the moves that transform $D_{i}$ into $D_{i+1}$ for each $0< i<{n-1}$  fix the segments $I_j$,  while possibly moving points of other singular sets to these segments. The complement of the $I_j$ in the skeleton of each $D_i$ also consists of $n$ disjoint segments; the move that transforms $D_{r-1}$ into $D_r$ and annihilates $A$ and $A'$ can have two forms: either it shrinks the $I_j$ or it shrinks the complement of the $I_j$. 

Let $Q_1,\ldots Q_{m_i}$ be the singular sets of $D_i$ which have some of their points on the segments $I_j$; see Figure~\ref{seq}.
\begin{figure}[h]
$$\includegraphics[width=300pt]{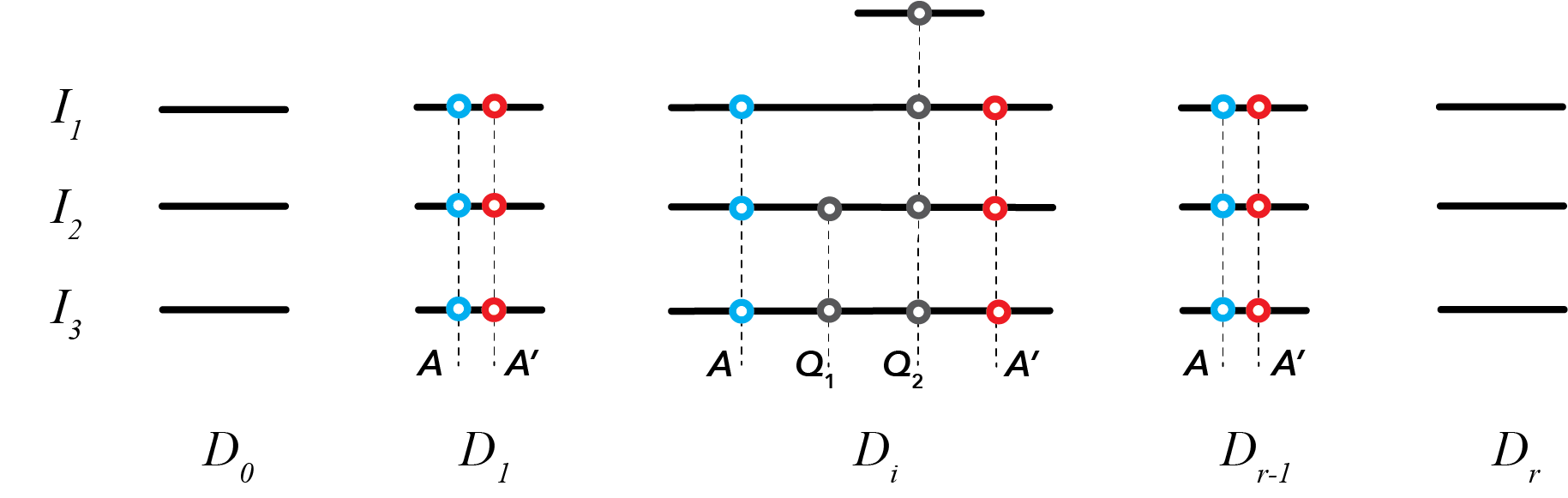}$$
\caption{The sequence of diagrams $D_i$ when  $\{A,A'\}=\{B,B'\}$.}\label{seq}
\end{figure}
 Note that each $Q_k$ either has all of its points on some of the $I_j$, or contains a point of $I_j$ for each $j$. Construct the  Gauss diagram ${D'_i}$ out of $D_i$ by
\begin{itemize}
\item
reversing the oriented cyclic order on the subset of each $Q_k$ that consists of the points of $I_1\cup\ldots\cup I_n$; 
\item
erasing the singular sets with the labels $A$ and $A'$.
\end{itemize}
Then, in the sequence of Gauss diagrams
$$D_0 ={D'_1}, {D'_2}, \ldots, {D'_{r-1}}$$
every diagram ${D'_i}$ with $i>1$ is either equal to the preceding diagram, or is obtained from it by a move of type $\Psi$. Since $D'_1$ is realizable, all the diagrams in the sequence are realizable, and each ${D'_i}$ has the same number of singular sets as $D_0$. Finally, the diagram $D'_{r-1}$ either coincides with $D_r$  or with $D_r^*$. Namely, it coincides with $D_r$ if the move from  $D_{r-1}$ to $D_r$ shrinks the intervals $I_j$ connecting $A$ with $A'$ and with $D_r^*$ if this move shrinks the complements of the $I_j$.

If $\{A,A'\}\cap\{B,B'\}$ consists of one element, without loss of generality, we assume that this element is $A'=B'$; as before, write $|A|=|A'|=|B|=n$. For each $0< i < r$ there are $n$ disjoint segments $I_1,\ldots I_n$ on the skeleton of the diagram $D_i$ that connect the points of $A$ with the points of $B$; each point of $A'$ lies between a point of $A$ and a point of $B$ on such a segment. Let $P_1,\ldots P_{a_i}$  be the singular sets of $D_i$ which have some of their points on the segments $I_j$ between the points of $A$ and $A'$ and let  $Q_1,\ldots Q_{b_i}$  be the singular sets of $D_i$ which have some of their points between the points of $A'$ and $B$.

\begin{figure}[h]
$$\includegraphics[width=400pt]{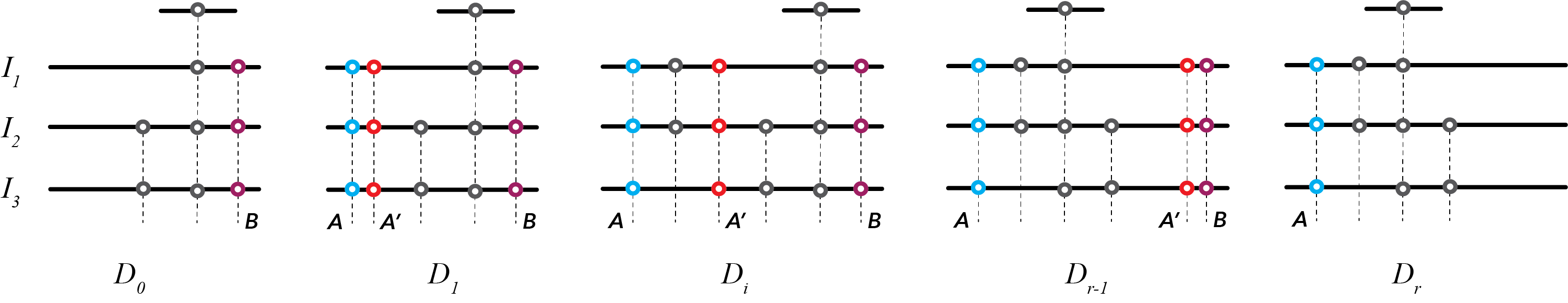}$$
\caption{The sequence of diagrams $D_i$ when $\{A,A'\}\cap\{B,B'\}$ has one element.}\label{seq2}.
\end{figure}

For each $0< i < r$ define the Gauss diagram $D'_i$ as follows:
\begin{itemize}
\item
in the diagram $D_i$, reverse the oriented cyclic order on the subset of each $P_k$ that consists of the points of $I_1\cup\ldots\cup I_n$;
\item
erase the singular sets with the labels $A$ and $A'$.
\end{itemize}
In a similar fashion, we define the diagram $D''_i$ 
\begin{itemize}
\item
by reversing in  $D_i$ the oriented cyclic order on the subset of each $P_k$ and each $Q_k$ that consists of the points of $I_1\cup\ldots\cup I_n$;
\item
erasing the singular sets with the labels $A$ and $B$.
\item
reversing the oriented cyclic order on $A'$, 
\end{itemize}
and the diagram $D'''_i$ by

\begin{itemize}
\item
reversing in  $D_i$ the oriented cyclic order on the subset of each $Q_k$ that consists of the points of $I_1\cup\ldots\cup I_n$;
\item
erasing the singular sets with the labels $A'$ and $B$.
\end{itemize}
Then, in the sequence of Gauss diagrams
$$D_0 ={D'_1}, {D'_2}, \ldots, {D'_{r-1}}, {D''_{r-1}}, , \ldots, {D''_2}, {D''_1}, {D'''_1}, {D'''_2} , \ldots, {D'''_{r-1}}=D_r, $$
each diagram is either equal to the preceding diagram, or is obtained from it by a move of type $\Psi$, or (as it is in the case of ${D''_{r-1}}$ and ${D'''_1}$) is obtained from it by renaming one of the singular sets. On the other hand, each ${D'_i}$, ${D''_i}$ and ${D'''_i}$ has the same number of singular sets as $D_0$.

\medskip
Finally, the case when $\{A,A'\}\cap\{B,B'\}=\emptyset$ is settled by a similar argument. 
Write $|A|=|A'|=m$ and $|B|=|B'|=n$, let $I_1,\ldots I_r$ be the disjoint intervals of the $D_i$ connecting the points of $A$ with those of $A'$ and let $J_1,\ldots J_s$ be the disjoint intervals connecting the points of $B$ with those of $B'$. (The intervals $I_k$ and $J_l$ may well not be disjoint).  Let $P_1,\ldots P_{a_i}$  be the singular sets of $D_i$ which have some of their points on the segments $I_k$ and let  $Q_1,\ldots Q_{b_i}$  be the singular sets of $D_i$ which have some of their points on the $J_l$. It may happen, of course, that $P_k=Q_l$ for some $k$ and $l$. 

For each $0< i < r$ define the Gauss diagram $D'_i$ as follows:
\begin{itemize}
\item
in the diagram $D_i$, reverse the oriented cyclic order on the subset of each $P_k$ that consists of the points of $I_1\cup\ldots\cup I_n$;
\item
erase the singular sets with the labels $A$ and $A'$.
\end{itemize}
In a similar fashion, we define the diagram $D''_i$ 
\begin{itemize}
\item
by reversing in  $D_i$ the oriented cyclic order on the subset of each $P_k$  that consists of the points of $I_1\cup\ldots\cup I_n$ and, afterwards,  on the subset of each $Q_k$  that consists of the points of $J_1\cup\ldots\cup J_m$.  
\item
erasing the singular sets with the labels $A, A', B$ and $B$.
\end{itemize}
and the diagram $D'''_i$ by
\begin{itemize}
\item
reversing in  $D_i$ the oriented cyclic order on the subset of each $Q_k$ that consists of the points of $J_1\cup\ldots\cup J_m$;
\item
erasing the singular sets with the labels $B$ and $B'$.
\end{itemize}
Then, in the sequence of Gauss diagrams
$$D_0 ={D'_1}, {D'_2}, \ldots, {D'_{r-1}}, {D''_{r-1}}, , \ldots, {D''_2}, {D''_1}, {D'''_1}, {D'''_2} , \ldots, {D'''_{r-1}}=D_r, $$
each diagram is either equal to the preceding diagram, or is obtained from it by a move of type $\Psi$, or, as it is in the case of ${D''_{r-1}}$ and ${D'''_1}$ is obtained from it by a move of type $\Phi$. The two moves of type $\Phi$ in this sequence are realizable by Lemma~\ref{rea}. Indeed, $D'_{r-1}$ is obtained from a realizable diagram $D'_1$ by moves of type $\Psi$ and ${D''_{r-1}}$ is then obtained by eliminating two adjacent singular sets by a move of type $\Phi.$  Similarly, we know that  ${D'''_{r-1}}$ is realizable, that ${D'''_1}$ is obtained from it by moves of type $\Psi$ and that ${D''_1}$ is obtained from ${D'''_1}$ by eliminating a pair of singular sets.

Finally, each ${D'_i}$ and ${D'''_i}$ has the same number of singular sets as $D_0$, while  ${D''_i}$ has two singular sets fewer than $D_0$.

\end{proof}

\end{document}